 \theoremstyle{plain}
\newtheorem{thm}{Theorem}[section]
  \theoremstyle{plain}
  \newtheorem{lem}[thm]{Lemma}
  \theoremstyle{definition}
  \newtheorem*{example*}{Example}
  \newtheorem*{def*}{Definition}
  \theoremstyle{remark}
  \newtheorem*{rem*}{Remark}
  \newtheorem{rem}[thm]{Remark}
  \theoremstyle{plain}
  \newtheorem{pro}[thm]{Proposition}
  \theoremstyle{plain}
  \theoremstyle{plain}
  \newtheorem{cor}[thm]{Corollary}
  \theoremstyle{definition}
\theoremstyle{definition}\newtheorem{defi}[thm]{Definition}
\theoremstyle{definition}
\theoremstyle{definition}\newtheorem{probl}[thm]{Problem}
\DeclareMathOperator{\rank}{rank}
\DeclareMathOperator{\Span}{Span}
\DeclareMathOperator{\at}{at}
\DeclareMathOperator{\coat}{coat}
\DeclareMathOperator{\Cone}{cone}
\title[Bruhat intervals and Coxeter matroids]{Weak generalized lifting property,\\ Bruhat intervals and Coxeter matroids}
\author{Fabrizio Caselli}\author{Michele D'Adderio}\author{Mario Marietti}
\address{Dipartimento di matematica, Universit\`a di Bologna, Piazza di Porta San Donato 5, 40126 Bologna, Italy}
\email{fabrizio.caselli@unibo.it}
\address{Universit\'e Libre de Bruxelles (ULB)\\D\'epartement de Math\'ematique\\ Boulevard du Triomphe, B-1050 Bruxelles\\ Belgium}
\email{mdadderi@ulb.ac.be}
\address{Dipartimento  di Ingegneria Industriale e Scienze Matematiche, Universit\`a Politecnica delle Marche, Via Brecce Bianche, 60131 Ancona,  Italy}
\email{m.marietti@univpm.it}
\subjclass[2010]{05E99 (primary), 20F55 (secondary)}
\keywords{Matroids, Bruhat order, Coxeter groups}
\begin{document}
\maketitle

\begin{abstract}We provide a weaker version of the generalized lifting property which holds in complete generality for all finite Coxeter groups, and we use it to show that every parabolic Bruhat interval of a finite Coxeter group is a Coxeter matroid. We also describe some combinatorial properties of the associated polytopes.
\end{abstract}
\section{Introduction}
Standard matroids were introduced by Whitney \cite{Whit} in 1935 as a combinatorial abstraction of the concept of linear dependence of vectors, and since then they have found wide applications in several branches of mathematics. This paper is devoted to a natural generalization of the notion of standard matroid, which has been introduced by Gelfand and Serganova \cite{GS} in 1987 and is known as  Coxeter matroid. Coxeter matroids are associated with a finite Coxeter group $W$ and a  parabolic subgroup $P$ of $W$, and one can recover standard matroids by taking the symmetric group $S_n$ as $W$, together with any maximal parabolic subgroup $P$ of $S_n$.

The original definition of Coxeter matroid is of algebraic combinatorial nature and is given in terms of Bruhat order. Nevertheless, a fundamental result in the theory of Coxeter matroids gives an equivalent definition of geometric combinatorial nature in terms of an associated polytope, which  is called a Coxeter matroid polytope: this is the content of a theorem by Gelfand--Serganova \cite{GS}  and Serganova--Vince--Zelevinsky \cite{SVZ} in different levels of generality (see Theorem \ref{defCM}).

The Kostant--Toda lattice is an integrable Hamiltonian system  which has been recently studied in detail by Kodama and Williams in \cite{KW}; in that paper, the authors study the asymptotic behaviour of the flow corresponding to an initial point associated with (a given point in) a Richardson cell $\mathcal R_{u,v}^+ $ of the totally non negative flag variety, where $[u,v]$ is a Bruhat interval in the symmetric group $S_n$.   Considering a natural multivariable generalization of this problem, called the full Kostant--Toda hierarchy, they also prove that the moment polytope associated to the flow with  initial point corresponding  to $\mathcal R_{u,v}^+ $ is exactly the Coxeter matroid polytope associated with the Bruhat interval $[u,v]$. This polytope is called a Bruhat interval polytope and their construction implies in particular that every Bruhat interval in $S_n$ is actually a Coxeter matroid. 

In \cite{TW}, Tsukerman and Williams analyze some combinatorial aspects of a Bruhat interval polytope of the symmetric group, and, in particular, they find a dimension formula, prove that every face of a Bruhat interval polytope is itself a Bruhat interval polytope, and extend some of these results to the class of Weyl groups. The key fact that they use in the  study of the Bruhat interval polytope is what they call the generalized lifting property, which generalizes the (standard) lifting property for the symmetric group. This generalized lifting property asserts that, for every $u<v$ in the symmetric group and every ``minimal'' reflection (i.e. a transposition) $t$ such that $u<ut$ and $v>vt$, we have $u\lhd ut\leq v$ and $u\leq vt\lhd v$, where the symbol $\lhd$ denotes the covering relation in Bruhat order. The main fact about this property is that such minimal reflection always exists. In \cite{CS}, the generalized lifting property is proved to hold true for a Coxeter group $W$  if and only if $W$ is finite and simply laced.

In this paper, we first provide a weak version of the generalized lifting property, which on the contrary holds for all finite Coxeter groups and may have interest in its own right. The proof of this property is based on the theory of reflection orderings on the set of positive roots in every Coxeter system. Then we extend the results of Tsukerman and Williams to the context of all finite Coxeter groups and in particular we show  that all parabolic Bruhat intervals in finite Coxeter groups are actually Coxeter matroids as a consequence of the weak generalized lifting property. Furthermore, we show that, also in this level of generality, faces of Bruhat interval polytopes are themselves Bruhat interval polytopes. The proof is first established in the standard, i.e. non-parabolic, case, and then it is deduced in the general parabolic case using some tools of convex geometry that we develop. We also provide, in the standard case, a characterization of those subintervals which give rise to a face.

All results in this paper are valid at least in the generality of  all finite Coxeter groups. The corresponding proofs are mainly of combinatorial nature and within the framework of Coxeter group theory; we also point out that all proofs  make no use of the classification of these groups. 



\section{Preliminaries and background}

In this section, we collect  some notation, definitions,
 and results that are used in the rest of this work. We assume knowledge of the basic definitions of poset theory (see \cite{StaEC1}) and Coxeter group theory (see \cite{BB} or \cite{Hum}).

We  follow \cite[Chapter 3]{StaEC1} for undefined notation and 
terminology concerning partially ordered sets. In particular,
 given $x$ and $y$ in a partially ordered set  $P$, we say that $y$ {\em covers} $x$ (or $x$ is {\em covered} by $y$) and we write $x \lhd y$ if the interval $ \{ z \in P\colon\, x \leq z \leq y \}$, denoted $[x,y]$, has two elements,  $x$ and $y$. An element $z $ in an interval $[u,v]$ is said to be an {\em atom} (respectively, a {\em coatom}) of $[u,v]$ if $u \lhd z$ (respectively, $z \lhd v$).  The {\em Hasse diagram} of $P$ is any drawing of the graph having $P$ as vertex set and $ \{ \{ x,y \} \in \binom {P}{2} \colon\,  \text{ either $x \lhd y$ or $y \lhd x$} \}$ as edge set, with the convention that, if $x \lhd y$, then the edge $\{x,y\}$ goes upward from $x$ to $y$. When no confusion arises, we make no distinction between the Hasse diagram and its underlying graph.


Given a Coxeter system $(W,S)$, we denote by $e$
the identity of $W$, and we let $T =\{ w s w ^{-1} \colon\,  w \in W, \; s \in S \}$ be the set of {\em reflections} of $W$. The Coxeter group $W$ is partially ordered by {\em Bruhat order} (see, for example,  \cite[\S 2.1]{BB} or \cite[\S 5.9]{Hum}), which is denoted by $\leq$. The Bruhat order is the partial order whose  covering relation $\lhd$ is as follows: if $u,v \in W$, then $u \lhd v$ if and only if $uv^{-1} \in T$ and $\ell (u)=\ell (v)-1$. 

For later reference, we state the following well-known result, which is implied by the fact that the order complex of a Bruhat interval is shellable (see \cite{BW}).
\begin{pro}
\label{collegate}
Let $(W,S)$ be a Coxeter system, and $u,v\in W$ with $u\leq v$. Let $C$ and $C'$ be two complete chains from $u$ to $v$. Then there exist $k\in \mathbb N$ and a sequence $C=C_1, C_2, \ldots, C_k=C'$ of complete chains from $u$ to $v$ such that $C_i$ and $C_{i+1}$ differ by one element, for all $i=1,2, \ldots, k-1$.  
\end{pro}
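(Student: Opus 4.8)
The plan is to deduce the statement from the shellability of the order complex $\Delta$ of the closed interval $[u,v]$, which is the content of \cite{BW}. First I would recall that Bruhat order is graded by the Coxeter length $\ell$ — indeed $x\lhd y$ forces $\ell(y)=\ell(x)+1$ — so every complete chain from $u$ to $v$ consists of exactly $\ell(v)-\ell(u)+1$ elements. Hence $\Delta$ is a \emph{pure} simplicial complex whose facets are precisely the complete chains from $u$ to $v$ (these are exactly the maximal chains of $[u,v]$, since $u$ is the bottom and $v$ the top), and two facets meet in a codimension-one face, a \emph{ridge}, if and only if they differ in exactly one element.

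Next I would invoke the standard fact that a shellable pure simplicial complex is \emph{strongly connected}, i.e.\ its facet--ridge graph (vertices the facets, an edge between two facets sharing a ridge) is connected. This is immediate from the definition of a shelling $F_1,\dots,F_t$: for each $j\ge 2$ the intersection $F_j\cap(F_1\cup\cdots\cup F_{j-1})$ is a nonempty union of ridges of $F_j$, so $F_j$ shares a ridge with some $F_i$, $i<j$, and a trivial induction then puts all facets in one connected component of the facet--ridge graph. Translating through the dictionary above, this says precisely that any two complete chains $C$ and $C'$ from $u$ to $v$ are joined by a sequence $C=C_1,\dots,C_k=C'$ of complete chains in which consecutive terms differ by a single element, which is the assertion.

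I expect no real obstacle along this route, the only genuine input being the shellability theorem quoted from \cite{BW} (equivalently, the $EL$-shellability of Bruhat order). For completeness I would also note that a self-contained combinatorial argument by induction on $n=\ell(v)-\ell(u)$ is possible: the cases $n\le 2$ are immediate — for $n=2$ one uses that every length-two Bruhat interval is a diamond, so two complete chains differ at most in their middle element — and in the inductive step one first disposes of the situation where $C$ and $C'$ share their second element, or their second-to-last element, by passing to a shorter interval. The delicate case, and the reason the shellability argument is cleaner, is the remaining one, where one must first replace $C$ by an adjacent chain agreeing with $C'$ in its second element; this amounts to producing, from $u\lhd x$, $u\lhd x'$ with $x\ne x'$ and $x,x'\le v$, a common cover of $x$ and $x'$ lying below $v$ — a "local confluence" that the diamond property does not supply on its own but that shellability packages for free.
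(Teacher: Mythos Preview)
Your argument is correct and is precisely the route the paper indicates: the proposition is stated there without proof, with only the remark that it ``is implied by the fact that the order complex of a Bruhat interval is shellable (see \cite{BW}),'' and you have spelled out exactly that implication via the strong connectivity of shellable pure complexes. The additional inductive sketch you offer is not needed, but your identification of the delicate step (producing a common cover below $v$) is accurate.
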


We denote by $V$ the finite dimensional real vector space that realizes the standard geometric representation  $W\rightarrow GL(V)$ of the Coxeter system $(W,S)$ (see \cite[\S 4.2]{BB}). We denote the set of roots by $\Phi$ and the set of positive roots by $\Phi^+$. We write $t_{\alpha}$ for the reflection corresponding to the positive root $\alpha$ and, conversely, $\alpha_{t}$ for the positive root corresponding to the reflection $t$. For every covering relation $x\lhd y$ in $W$, we denote by $\alpha_{x\lhd y}$ the root corresponding to the reflection $t$ such that $y=tx$. 

A \emph{reflection ordering} is a total ordering $\preceq $ on $\Phi^+$ provided that, for all $\alpha, \beta \in \Phi^+$ and $a,b \in \mathbb R_{>0}$ such that $a\alpha+  b\beta \in \Phi^+$, either $\alpha \preceq a\alpha+  b\beta \preceq \beta$ or $\beta \preceq a\alpha + b\beta \preceq \alpha$. Equivalently, a reflection ordering is a total ordering $\preceq $ on $T$ provided that, for all $t, r \in T$, either $t \preceq trt \preceq trtrt  \preceq  \cdots \preceq rtrtr \preceq rtr \preceq r$ or $ r \preceq rtr \preceq rtrtr  \preceq  \cdots \preceq trtrt \preceq trt \preceq t$ (where all expressions are reduced). 
By the definition of Bruhat order, if $u,v\in W$ with $u\lhd v$, then $v=tu$ for a certain $t\in T$: in this case, we label the edge $\{u,v\}$ of the Hasse diagram of $W$ with $t$. Fix a reflection ordering $\preceq$, and let $x,y \in W$ with $x\leq y$. We say that a complete chain $x \lhd t_1x \lhd t_2t_1x \lhd \cdots \lhd t_n \cdots t_1 x=y$ from $x$ to $y$ is \emph{increasing} if $t_1 \preceq t_2\preceq \cdots \preceq t_n$, i.e., if the labels of the edges of the chain are increasing in the fixed reflection ordering from bottom to top.
We state the following result for later reference (see \cite[Proposition~4.3]{D1993}). 
\begin{pro}
\label{unasola}
Let $(W,S)$ be a Coxeter system, and fix any reflection ordering. If $u,v\in W$ with $u\leq v$, then  there is a unique increasing complete chain from $u$ to $v$. 
\end{pro}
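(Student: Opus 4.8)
The plan is to fix a reflection ordering $\preceq$ and to prove the sharper statement that the lexicographically smallest maximal chain from $u$ to $v$ is the unique increasing one, where maximal chains are compared by their sequences of edge-labels read from the bottom up. Three inputs are used. First, every Bruhat interval of length $2$ is \emph{thin}, i.e.\ has exactly two maximal chains (standard). Second, any two distinct atoms of a Bruhat interval $[x,y]$ admit a common cover $w$ inside $[x,y]$, so that $[x,w]=\{x,a_1,a_2,w\}$ is a length-$2$ interval; this is a structural property of Bruhat order. Third — and this is the only place where the hypothesis that $\preceq$ is a reflection ordering enters — in a length-$2$ interval exactly one of the two maximal chains is increasing, and it is the chain emanating from the atom carrying the $\preceq$-smaller of the two bottom edge-labels (equivalently, it is the lexicographically smaller of the two chains). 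This last point follows by applying the defining inequality of $\preceq$ inside the rank-$2$ dihedral reflection subgroup generated by the edge-labels of $[x,y]$.

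\emph{Existence.} Let $C$ be the lexicographically smallest maximal chain from $u$ to $v$; it is well defined, as distinct maximal chains have distinct label sequences. If $C$ had a descent $z'\lhd z\lhd z''$ with $\alpha_{z'\lhd z}\succ\alpha_{z\lhd z''}$, then in the length-$2$ interval $[z',z'']$ the chain through $z$ would be decreasing, so the other atom $w$ of $[z',z'']$ satisfies $\alpha_{z'\lhd w}\prec\alpha_{z'\lhd z}$; replacing $z$ by $w$ produces a lexicographically smaller chain, a contradiction. Hence $C$ is increasing.

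\emph{Uniqueness.} The heart of the matter is the claim: if $C=(u=x_0\lhd x_1\lhd\cdots\lhd x_n=v)$ is increasing with labels $\alpha_1\prec\cdots\prec\alpha_n$, then $\alpha_1$ is the $\preceq$-minimum of the bottom edge-labels over all atoms of $[u,v]$. Suppose not, so some atom $z^{*}$ of $[u,v]$ has $\alpha_{u\lhd z^{*}}\prec\alpha_1$. I would then run a ``staircase'' up $C$, maintaining an index $1\le i\le n-1$ and an element $y$ with $x_i\lhd y\le v$, $\ell(y)=\ell(x_i)+1$, and $\alpha_{x_i\lhd y}\prec\alpha_{i+1}$. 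Start with $i=1$ and $y$ the common cover of $x_1$ and $z^{*}$ inside $[u,v]$: in the length-$2$ interval $[u,y]=\{u,x_1,z^{*},y\}$ the increasing chain runs through the smaller-labelled atom $z^{*}$, so the chain $u\lhd x_1\lhd y$ is decreasing and $\alpha_{x_1\lhd y}\prec\alpha_1\prec\alpha_2$. At a generic step, if $y=x_{i+1}$ we get the absurdity $\alpha_{i+1}=\alpha_{x_i\lhd y}\prec\alpha_{i+1}$; otherwise $x_{i+1}$ and $y$ are distinct atoms of $[x_i,v]$, so they have a common cover $y'$ inside $[x_i,v]$, and in the length-$2$ interval $[x_i,y']=\{x_i,x_{i+1},y,y'\}$ the increasing chain runs through $y$ (the smaller-labelled atom), hence $x_i\lhd x_{i+1}\lhd y'$ is decreasing and $\alpha_{x_{i+1}\lhd y'}\prec\alpha_{i+1}\prec\alpha_{i+2}$; the staircase continues with $(i+1,y')$. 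Since $\ell(y)=\ell(x_i)+1$ strictly increases with $i$, we must reach $i=n-1$, which forces $y=v=x_n$, the excluded situation — a contradiction. This proves the claim; in particular any two increasing chains from $u$ to $v$ have the same bottom label, hence the same first atom, and one finishes by induction on $\ell(v)-\ell(u)$ applied to the shorter interval between that atom and $v$.

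The two points I expect to require care are exactly the non-elementary inputs above: pinning down the rank-$2$ reflection-ordering statement (that the increasing chain of a length-$2$ interval is the one issuing from the atom with the $\preceq$-smaller label, equivalently the lexicographically smaller chain), which is where the whole strength of the notion of reflection ordering is concentrated; and the structural fact that two atoms of a Bruhat interval always have a common cover \emph{within} the interval, on which the staircase relies at every step. Everything else — the existence step, the induction that deduces uniqueness from the minimality claim, and the bookkeeping in the staircase — is routine.
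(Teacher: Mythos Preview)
The paper does not prove this proposition; it is quoted without proof from Dyer \cite[Proposition~4.3]{D1993} as a background result, so there is no paper argument to compare yours to. That said, your proposal contains a real gap, and you have in fact put your finger on it yourself in your final paragraph.

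Your second ``input'' --- that any two distinct atoms of a Bruhat interval $[x,y]$ admit a common cover inside $[x,y]$ --- is false. A counterexample appears in this very paper, in Remark~\ref{4.18} (there used to flag a gap in \cite{TW}): in $S_4$ take $q=2143=s_1s_3$ with the two covers $q'=2341$ and $z=4123$. The covers of $2341$ are $3241$ and $2431$, while the covers of $4123$ are $4213$ and $4132$; these sets are disjoint, so $q'$ and $z$ have no common cover in $S_4$ at all, hence none inside $[2143,4321]$, of which both are atoms. Your staircase for uniqueness invokes exactly this common-cover property at the very first step (to produce $y$ above $x_1$ and $z^{*}$) and again at every subsequent step (to produce $y'$ above $x_{i+1}$ and $y$), so the argument does not go through.

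Your existence argument (the lexicographically smallest chain has no descent) is fine, and your rank-$2$ input is the correct key lemma. For uniqueness one needs a different mechanism: Dyer's original proof does not climb via common covers of pairs of atoms, but instead exploits how a reflection ordering restricts to dihedral reflection subgroups together with an exchange/lifting-type statement, ultimately showing that every maximal chain other than the lexicographically least one contains a descent. Whatever route you take, the step ``two given atoms of $[x,v]$ have a common cover in $[x,v]$'' must be replaced.
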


For each $w\in W$, the \emph{$w$-Bruhat order}, denoted $\leq ^w$, is the partial order on $W$ defined as follows: 
$$u \leq ^w v \iff w^{-1}u \leq w^{-1}v.$$
Clearly, $\leq ^e$ coincides with the Bruhat order $\leq$.

For each subset $J\subseteq S$, we denote by  $W_J $ the parabolic subgroup of $W$ generated by $J$,  by $W/W_J$ the set of left cosets of $W_J$, and by  $W^{J}$ the set of minimal left coset representatives:
$$W^{J} =\{ w \in W \colon\,  ws>w \textrm{ for all }s\in J \}.$$

The following classical result (see, for example,  \cite[\S 2.4]{BB} or \cite[\S 1.10]{Hum}) implies that the set $W^J$ is actually a set of left coset representatives of $W_J$. 
\begin{pro}
\label{fattorizzo}
If  $J \subseteq S$, then
\begin{enumerate}
\item[(i)] 
every $w \in W$ has a unique factorization $w=w^{J}  w_{J}$ 
with $w^{J} \in W^{J}$ and $w_{J} \in W_{J}$,
\item[(ii)] for this factorization, $\ell(w)=\ell(w^{J})+\ell(w_{J})$.
\end{enumerate}
\end{pro}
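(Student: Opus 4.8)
The plan is to establish (i) and (ii) simultaneously, the real content being a length--additivity lemma for products of an element of $W^J$ by an element of $W_J$. For the \emph{existence} of the factorization in (i), given $w\in W$ I would let $w^J$ be an element of minimal length in the left coset $wW_J$ and set $w_J:=(w^J)^{-1}w\in W_J$, so that $w=w^Jw_J$. If $w^Js<w^J$ for some $s\in J$, then $w^Js$ would again lie in $wW_J$ (as $s\in W_J$) but have strictly smaller length, contradicting the minimality of $\ell(w^J)$ in the coset; hence $w^Js>w^J$ for all $s\in J$, i.e. $w^J\in W^J$.

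The heart of the matter is the following claim: \emph{if $x\in W^J$ and $y\in W_J$, then $\ell(xy)=\ell(x)+\ell(y)$.} I would prove this by induction on $\ell(y)$, the case $\ell(y)=0$ being trivial. For $\ell(y)\geq 1$ write $y=y's$ with $s\in J$ and $\ell(y')=\ell(y)-1$; then $y'\in W_J$, the inductive hypothesis gives $\ell(xy')=\ell(x)+\ell(y')$, and it remains to see that $\ell(xy's)>\ell(xy')$. Assuming the contrary, concatenate a reduced word for $x$ with one for $y'$ to obtain a reduced word $xy'=s_1\cdots s_ps_{p+1}\cdots s_{p+q}$ with $p=\ell(x)$ and $q=\ell(y')$. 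Since right multiplication by $s$ shortens $xy'$, the Exchange Property (deletion condition) produces an index $i$ with $xy's=s_1\cdots\widehat{s_i}\cdots s_{p+q}$. If $i>p$, then $y's=s_{p+1}\cdots\widehat{s_i}\cdots s_{p+q}$ has length at most $q-1$, contradicting $\ell(y's)=\ell(y')+1$. If $i\leq p$, multiplying on the right by $(s_{p+1}\cdots s_{p+q})^{-1}=(y')^{-1}$ shows that $x\,r=s_1\cdots\widehat{s_i}\cdots s_p$ has length at most $p-1$, where $r:=y'\,s\,(y')^{-1}$; but $r$ is a reflection lying in $W_J$, so its positive root $\alpha_r$ is a nonnegative combination of the simple roots $\alpha_t$, $t\in J$, each of which is sent by $x$ into $\Phi^+$ because $x\in W^J$; hence $x(\alpha_r)\in\Phi^+$ and therefore $\ell(xr)>\ell(x)=p$, a contradiction. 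This proves the claim.

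Granting the claim, (ii) follows at once by applying it to $x=w^J$, $y=w_J$. For the \emph{uniqueness} in (i), suppose $w=xy$ with $x\in W^J$ and $y\in W_J$; every element of the coset $wW_J=xW_J$ has the form $xy_0$ with $y_0\in W_J$, and by the claim $\ell(xy_0)=\ell(x)+\ell(y_0)\geq\ell(x)$, with equality precisely when $y_0=e$. Thus $x$ is the unique element of minimal length in $wW_J$, so $x=w^J$ and hence $y=w_J$. The main obstacle is the inductive lemma, and inside it the case in which the deleted letter belongs to the reduced word of $x$: the clean way to dispose of it is the remark that an element of $W^J$ maps every positive root of $W_J$ to a positive root — equivalently, that no reflection of $W_J$ is a right descent of an element of $W^J$ — which is where the geometric representation enters. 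Everything else is routine bookkeeping.
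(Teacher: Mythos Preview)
Your argument is correct. The existence via a minimal-length coset representative, the inductive length-additivity lemma handled through the Exchange Condition, and the uniqueness deduced from that lemma are all sound; in particular your treatment of the delicate case $i\leq p$ is right, since $r=y'sy'^{-1}$ is a reflection of $W_J$, so $\alpha_r\in\Phi_J^+\subseteq\sum_{t\in J}\mathbb{R}_{\geq 0}\alpha_t$, and $x\in W^J$ forces $x(\alpha_t)\in\Phi^+$ for each $t\in J$, whence $x(\alpha_r)\in\Phi^+$ and $\ell(xr)>\ell(x)$.

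There is nothing to compare against, however: the paper does not prove this proposition. It is quoted as a classical fact with references to \cite[\S 2.4]{BB} and \cite[\S 1.10]{Hum}, and your proof is essentially the standard one found there.
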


Given two left cosets $A,B\in W/W_J$ and $w\in W$, we let $A\leq^w B$ provided that there exist $u\in A$ and $v\in B$ such that $u \leq^w v$. For short, we write $A\leq B$ to mean $A\leq^e B$.

The following is a reformulation of \cite[Theorems~5.14.5 and 5.14.6]{BGW}.
\begin{thm}
\label{ecco}
Let $(W,S)$ be finite, $J\subseteq S$, and $w\in W$. For each $A\in W/W_J$, there exist elements $\min^w A, \max^w A\in A$ such that $\min^w A\leq^w u\leq^w \max^w A$, for all $u\in A$.
Furthermore, if $A,B\in W/W_J$, then the following are equivalent:
\begin{enumerate}
\item $A\leq^w B$,
\item $\min^w A\leq^w \min^w B$,
\item $\max^w A\leq^w \max^w B$.
\end{enumerate}
\end{thm}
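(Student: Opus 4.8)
The plan is to reduce the whole statement to the case $w=e$ and then fall back on the classical structure theory of parabolic quotients (one may also simply read it off from \cite[Theorems~5.14.5 and 5.14.6]{BGW} after this translation). For the reduction, note that the left translation $\lambda_w\colon x\mapsto w^{-1}x$ is, by the very definition of $\leq^w$, an isomorphism of posets from $(W,\leq^w)$ to $(W,\leq)$, and that it maps the coset $A\in W/W_J$ to the coset $w^{-1}A\in W/W_J$. Hence it suffices to exhibit, for each coset $C\in W/W_J$, a Bruhat-minimum $\min C$ and a Bruhat-maximum $\max C$ both lying in $C$, and to prove the three equivalences for the Bruhat order $\leq=\leq^e$ on cosets; the general assertions then follow by setting $\min^w A:=w\cdot\min(w^{-1}A)$ and $\max^w A:=w\cdot\max(w^{-1}A)$ and transporting everything through $\lambda_w$ between the cosets $A,B$ and $w^{-1}A,w^{-1}B$. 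Such a minimum and maximum, once shown to exist, are automatically unique.

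Next I would establish the existence of $\min C$ and $\max C$. Let $c\in C\cap W^J$ be the unique element of minimal length, so $C=cW_J$ (Proposition~\ref{fattorizzo}); using the standard fact that $\ell(cv)=\ell(c)+\ell(v)$ for $v\in W_J$, concatenation of reduced words gives a reduced word $\mathbf{c}\mathbf{v}$ for any $u=cv\in C$, and deleting its $\mathbf{v}$-part exhibits a reduced word for $c$ as a subword; by the subword property (see \cite{BB}), $c\leq u$. Thus $\min C:=c$ works, and this part uses no finiteness hypothesis. For the maximum I would use that $W_J$ is finite: if $w_J^0$ is its longest element, then $v\leq w_J^0$ in $W$ for every $v\in W_J$ (the longest element is the top of Bruhat order on the finite group $W_J$, whose Bruhat order is the restriction of that of $W$). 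Choosing a reduced word for $w_J^0$ that contains a reduced subword for $v$ and prepending $\mathbf{c}$ — both concatenations being reduced since $c\in W^J$ — the subword property yields $u=cv\leq cw_J^0$; hence $\max C:=cw_J^0\in C$ is the Bruhat-maximum (and the unique longest element) of $C$.

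The implications $(2)\Rightarrow(1)$ and $(3)\Rightarrow(1)$ are immediate, because $\min C$ and $\max C$ actually belong to $C$. For $(1)\Rightarrow(2)$ I would invoke the monotonicity of the parabolic projection $w\mapsto w^J$: if $u\leq v$ in $W$ then $u^J\leq v^J$. This is classical and may be cited, but it follows in the same spirit: fix a reduced word $\mathbf{v}^J\mathbf{v}_J$ for $v$ and a reduced subword inside it representing $u$; its portion inside $\mathbf{v}^J$ represents some $x\leq v^J$, while the remaining portion represents an element of $W_J$, so $u$ and $x$ lie in the same coset and $u^J=x^J\leq x\leq v^J$ by the minimum property just proved. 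Applying this to $u\in A$, $v\in B$ with $u\leq v$, and recalling $\min A=u^J$, $\min B=v^J$, gives $\min A\leq\min B$.

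Finally, $(1)\Leftrightarrow(3)$ I would deduce from $(1)\Leftrightarrow(2)$ via the order-reversing symmetry $x\mapsto w_0x$ of $(W,\leq)$, which permutes the left cosets of $W_J$. One checks directly that $A\leq B\iff w_0B\leq w_0A$ and that $\max C=w_0\cdot\min(w_0C)$ (the image of the minimum of the coset $w_0C$ under the order-reversing bijection is an element of $C$ dominating all of $C$); combining these, $\max A\leq\max B\iff\min(w_0B)\leq\min(w_0A)\iff w_0B\leq w_0A\iff A\leq B$, where the middle equivalence is $(1)\Leftrightarrow(2)$. I expect the only point of real content to be the monotonicity of $w\mapsto w^J$ used in $(1)\Rightarrow(2)$; everything else is the reduction to $w=e$ together with routine manipulations with reduced words, longest elements, and the subword property.
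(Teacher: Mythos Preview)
Your argument is correct. The reduction to $w=e$ via left translation is the right first move, the existence of $\min C$ and $\max C$ is handled cleanly with the subword property and the longest element of $W_J$, the monotonicity of the parabolic projection $w\mapsto w^J$ is the standard (and correctly sketched) key step for $(1)\Rightarrow(2)$, and the $w_0$-symmetry neatly reduces $(1)\Leftrightarrow(3)$ to $(1)\Leftrightarrow(2)$.

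As for the comparison: the paper does not actually prove Theorem~\ref{ecco}. It is stated there as background and attributed to \cite[Theorems~5.14.5 and 5.14.6]{BGW}, with no argument given. So there is no ``paper's own proof'' to compare to; what you have written is a self-contained proof of a result that the paper simply quotes. Your approach is in fact the standard one (and is essentially how the cited reference proceeds), so nothing is lost or gained relative to the paper beyond the fact that you supply details the paper omits.
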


Let $W$ be a finite Coxeter group, which we identify with the finite reflection group given by its standard geometric representation on the vector space $V$.  For later references, we state explicitly the following well-known and easy fact. 
\begin{lem}
\label{anticamera}
Let $(W,S)$ be a finite Coxeter system, and $p\in V$ be such that  
$(p,\alpha_{s})<0$ for all $s\in S$. Let $x,y \in W$. If $x\lhd y$, then there exists $c >0$ such that $$y(p)= x(p) + c \alpha_{x\lhd y}.$$
\end{lem}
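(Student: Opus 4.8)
The plan is to compute $y(p)$ explicitly by applying the relevant reflection to $x(p)$, and then to check that the resulting coefficient of $\alpha_{x\lhd y}$ is positive.

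Write $y = tx$ with $t = yx^{-1} \in T$, and set $\beta := \alpha_{x\lhd y} = \alpha_t \in \Phi^+$, so $t = t_\beta$. Applying $t_\beta$ to $x(p)$ via the reflection formula $t_\beta(v) = v - 2\frac{(v,\beta)}{(\beta,\beta)}\,\beta$ gives
\[
y(p) = t_\beta\bigl(x(p)\bigr) = x(p) + c\,\beta, \qquad c := -\,2\,\frac{(x(p),\beta)}{(\beta,\beta)}.
\]
Because $W$ is finite the bilinear form $(\cdot,\cdot)$ is positive definite, so $(\beta,\beta)>0$, and the whole statement reduces to the inequality $(x(p),\beta)<0$.

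Since the geometric representation preserves $(\cdot,\cdot)$, we have $(x(p),\beta) = (p, x^{-1}\beta)$; and for any $\gamma \in \Phi^+$, writing $\gamma = \sum_{s\in S} c_s\alpha_s$ with all $c_s\ge 0$ and some $c_s>0$, the hypothesis $(p,\alpha_s)<0$ for all $s\in S$ forces $(p,\gamma)<0$. Hence it is enough to prove that $x^{-1}\beta\in\Phi^+$. For this I invoke the standard length criterion for reflections: for $\gamma\in\Phi^+$ and $w\in W$, $\ell(t_\gamma w)>\ell(w)$ if and only if $w^{-1}\gamma\in\Phi^+$. Applying it with $\gamma=\beta$, $w=x$, the covering relation $x\lhd y$ gives $\ell(t_\beta x)=\ell(x)+1>\ell(x)$, hence $x^{-1}\beta\in\Phi^+$, and therefore $c>0$.

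There is no substantial obstacle here --- the lemma is flagged as ``well-known and easy'' --- but a little care is needed to get the side and the direction of the length criterion right (it is $x^{-1}\beta$, not $x\beta$, that must be positive, because $t_\beta$ multiplies $x$ on the left), and to note that positive-definiteness of $(\cdot,\cdot)$, used for $(\beta,\beta)>0$, is the one place where finiteness of $W$ is invoked.
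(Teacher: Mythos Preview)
Your proof is correct. The paper itself does not supply a proof of this lemma---it is merely stated as a ``well-known and easy fact''---so there is no argument to compare against; your computation via the reflection formula together with the length criterion $\ell(t_\beta x)>\ell(x)\Leftrightarrow x^{-1}\beta\in\Phi^+$ is exactly the standard verification one would expect.
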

Attached with every set of left cosets $\mathcal M\subseteq W/W_J$, there is a polytope $\Delta_{\mathcal M}$, which is defined as follows. Fix a base point $p\in V$ that satisfies 
$(p,\alpha_{s})
  \begin{cases}
 <0 &  \text{for $s\notin J$}\\
 =0 &  \text{for $s\in J$}.
\end{cases}
$
Since $W_J$ fixes $p$, we can define a map $\delta_p: W/W_J \to V$ sending the left coset $vW_J$ to $v(p)$. 
\begin{defi}
\label{politopo}
The polytope $\Delta_{\mathcal M}(p)$ is defined as the convex hull of the points in $\delta_p(\mathcal M)$. \end{defi}
\begin{rem}
\label{vero}
Since the action of $W$ on $\delta_p(W/W_J)$ is transitive,  each point in $\delta_p(\mathcal M)$ is a vertex of  $\Delta_{\mathcal M}(p)$, for all $\mathcal M \subseteq W/W_J$.
\end{rem}
\begin{rem}
\label{vero2}
A theorem of Borovik and Vince \cite[Theorem 5.5]{BV} states that the combinatorial type (i.e. the isomorphism class of the simplicial complex of the faces) of the polytope $\Delta_{\mathcal M}(p)$ does not depend on the chosen point $p$. This is the reason why it is customary to drop the dependence on $p$ and simply write $\Delta_{\mathcal M}$.
\end{rem}
The following result is part of the most important result in the theory of Coxeter matroids and is known as the Gelfand--Serganova Theorem (see \cite[Theorem 5.2]{SVZ} or \cite[\S 6.3]{BGW}).
\begin{thm}
\label{defCM}
Let $(W,S)$ be a finite Coxeter system, $J\subseteq S$, and $\mathcal M \subseteq W/W_J$ with $\mathcal M\neq \emptyset$. The following are equivalent:
\begin{enumerate}
\item for every $w\in W$, there is a unique $A\in \mathcal M$ such that $B\leq^w A$ for all $B\in \mathcal M$ (Maximality Property),
\item every edge of the polytope $\Delta_{\mathcal M}$ is parallel to a root in $\Phi$.
\end{enumerate}
\end{thm}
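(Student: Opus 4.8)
The plan is to translate both conditions into a single dictionary between the $w$-Bruhat orders on $W/W_J$ and linear forms $(\xi,-)$ on $V$, and to read off the equivalence from two auxiliary facts. Write $C^+=\{v\in V\colon (v,\alpha_s)>0 \text{ for all }s\in S\}$ for the open fundamental chamber, so that the $W$-regular vectors are exactly the points of the chambers $wC^+$, $w\in W$. \emph{(Order Lemma.)} If $\xi\in wC^+$, then $A<^wB$ implies $(\xi,\delta_p(A))<(\xi,\delta_p(B))$. Using the identity $(\xi,\delta_p(A))=(w^{-1}\xi,\delta_p(w^{-1}A))$ and the $W$-equivariance of $\delta_p$, this reduces to the case $w=e$, i.e.\ $\xi$ strictly dominant; since the poset $(W/W_J,\leq)$ is graded (see \cite{BB}), it then reduces to a covering relation $A\lhd B$, where a parabolic refinement of Lemma~\ref{anticamera} applies: taking minimal coset representatives $a\lhd b$ (Proposition~\ref{fattorizzo}) one has $b=t_\alpha a$ with $a^{-1}\alpha\in\Phi^+\setminus\Phi_J$, so $(p,a^{-1}\alpha)<0$ and $\delta_p(B)-\delta_p(A)=c\,\alpha$ for some $c>0$, whence $(\xi,\delta_p(B))-(\xi,\delta_p(A))=c\,(\xi,\alpha)>0$. \emph{(Root Lemma.)} For $A\ne B$ in $W/W_J$, the segment $[\delta_p(A),\delta_p(B)]$ is parallel to a root if and only if $B=t_\gamma A$ for some $\gamma\in\Phi$: indeed $\delta_p(A)$ and $\delta_p(B)$ lie on the single orbit $W(p)$ and hence have equal norm, so an equality $\delta_p(B)-\delta_p(A)=c\,\gamma$ forces $c=-2(\delta_p(A),\gamma)/(\gamma,\gamma)$, i.e.\ $t_\gamma$ interchanges the two points, and $\delta_p$ is injective because $\mathrm{Stab}_W(p)=W_J$. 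Finally I record two standard facts: a group element and its product with a reflection are always Bruhat-comparable (see \cite{BB}), hence any coset $A$ and any $tA$ with $t\in T$ are $\leq$-comparable, and likewise $\leq^w$-comparable for every $w$ (conjugate $t$ by $w$); and from a non-maximizing vertex of a polytope there is always an edge along which a given linear form that is injective on the vertex set strictly increases.

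For $(2)\Rightarrow(1)$, fix $w$ and pick $\xi$ in the open chamber $wC^+$ avoiding the finitely many hyperplanes $(\delta_p(A)-\delta_p(B))^\perp$ with $A\ne B$ in $\mathcal M$, so that $(\xi,-)$ is injective on $\delta_p(\mathcal M)$; let $A_0\in\mathcal M$ maximize $(\xi,-)$ there. I claim $B\leq^wA_0$ for every $B\in\mathcal M$, whence $A_0$ is the (necessarily unique) $\leq^w$-greatest element. Choose an edge-path $\delta_p(B)=\delta_p(B_0),\delta_p(B_1),\dots,\delta_p(B_k)=\delta_p(A_0)$ in $\Delta_{\mathcal M}$ along which $(\xi,-)$ strictly increases. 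By hypothesis each edge $[\delta_p(B_i),\delta_p(B_{i+1})]$ is parallel to a root, so by the Root Lemma $B_{i+1}=t_{\gamma_i}B_i$; thus $B_i$ and $B_{i+1}$ are $\leq^w$-comparable, and since $(\xi,\delta_p(B_i))<(\xi,\delta_p(B_{i+1}))$ the Order Lemma rules out $B_{i+1}<^wB_i$, forcing $B_i<^wB_{i+1}$. Chaining these inequalities yields $B<^wA_0$.

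For $(1)\Rightarrow(2)$, suppose for contradiction that $[\delta_p(A),\delta_p(B)]$ is an edge of $\Delta_{\mathcal M}$ not parallel to any root. Since an edge is a face, there is $\xi_0$ with $(\xi_0,\delta_p(A))=(\xi_0,\delta_p(B))>(\xi_0,\delta_p(C))$ for all $C\in\mathcal M\setminus\{A,B\}$. The $\xi$ with $(\xi,\delta_p(A))=(\xi,\delta_p(B))$ form the hyperplane $H=(\delta_p(A)-\delta_p(B))^\perp$, which is not a root hyperplane because the edge is not parallel to any root; hence the nonempty open subset of $H$ defined by the strict inequalities $(\xi,\delta_p(C))<(\xi,\delta_p(A))$ is not covered by the finitely many root hyperplanes, so it contains a $W$-regular $\xi$, lying in some $wC^+$. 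For this $w$, the Order Lemma shows that no $C\in\mathcal M$ satisfies $A<^wC$ (that would force $(\xi,\delta_p(A))<(\xi,\delta_p(C))$, which is false), so $A$ is $\leq^w$-maximal in $\mathcal M$; likewise $B$ is; and $A\ne B$ are $\leq^w$-incomparable, since $(\xi,\delta_p(A))=(\xi,\delta_p(B))$ excludes both $A<^wB$ and $B<^wA$. Thus $\mathcal M$ has two distinct $\leq^w$-maximal elements and no $\leq^w$-greatest one, contradicting $(1)$.

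The step I expect to be most delicate is the parabolic refinement of Lemma~\ref{anticamera} used inside the Order Lemma: one must verify that a covering relation in $W/W_J$ produces a \emph{strictly} positive multiple of a \emph{positive} root, the strictness being exactly the assertion that the root realizing the cover does not lie in $\Phi_J$ — which is where the position of $p$ on the wall $\bigcap_{s\in J}\alpha_s^{\perp}$ is used. Everything else is either a short convexity argument or a formal reduction to this lemma via the gradedness of $(W/W_J,\leq)$ and the equivariance of $\delta_p$.
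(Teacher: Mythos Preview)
The paper does not prove Theorem~\ref{defCM}: it is quoted as the Gelfand--Serganova Theorem, with references to \cite{SVZ} and \cite[\S6.3]{BGW}, and then used as a black box. So there is nothing in the paper to compare your argument against.

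That said, your proof is correct and is essentially the classical one from those references. The two auxiliary facts you isolate are exactly the right dictionary: the Order Lemma identifies $\leq^w$ with the linear order induced by any $\xi$ in the chamber $wC^+$, and the Root Lemma identifies root-parallel segments between orbit points with reflections, via the equal-norm constraint on the orbit $W(p)$. Your parabolic refinement of Lemma~\ref{anticamera} is fine: if $a^{-1}\alpha$ lay in $\Phi_J$ then $t_\alpha=at_{a^{-1}\alpha}a^{-1}\in aW_Ja^{-1}$, so $b=t_\alpha a\in aW_J$ and $A=B$, a contradiction; hence $(p,a^{-1}\alpha)<0$ strictly and the coefficient is positive. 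The remaining convexity steps (an increasing edge-path to the $(\xi,-)$-maximizer; perturbing a supporting $\xi_0$ inside $(\delta_p(A)-\delta_p(B))^\perp$ off the finitely many root hyperplanes) are standard and carried out cleanly.
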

A subset $\mathcal M \subseteq W/W_J$ is a \emph{Coxeter matroid} provided that the clauses of Theorem~\ref{defCM} are satisfied.

Every finite Coxeter group $W$ has a longest element $w_0$. Since the multiplication by $w_0$ is an order-reversing bijection of $W$, the Maximality Property is also equivalent to the Minimality Property: for every $w\in W$, there is a unique $A\in \mathcal M$ such that $A\leq^w B$ for all $B\in \mathcal M$.

\section{A weak generalized lifting property}

Let $(W,S)$ be a Coxeter system and  $W\rightarrow GL(V)$ be its standard geometric representation. Let $\{\alpha_s:\, s\in S\}$ be the basis of simple roots. If $\alpha \in \Phi^+$ with  $\alpha=\sum_{s\in S}a_s\alpha_s$ ($a_s\in \mathbb R_{\geq0}$), then we let  
\[
\bar \alpha=\frac{1}{\sum_{s\in S} a_s}\alpha.                   
\]


The following result is a generalization of \cite[Proposition~5.2.1]{BB}.
\begin{pro}
\label{reflecorder}
 Let $f_1,\ldots,f_n$ be a basis of $V^*$. We define a total ordering $\preceq $ on $\Phi^+$ in the following way: for $\alpha,\beta\in \Phi^+$, we let $\alpha \preceq \beta$ if 
 \[
  (f_1(\bar \alpha),\ldots,f_n(\bar \alpha))\leq_{lex} (f_1(\bar \beta),\ldots,f_n(\bar \beta)),
 \]
where $\leq_{lex}$ denotes the lexicographical order.
The ordering $\preceq$ is  a reflection ordering.
\end{pro}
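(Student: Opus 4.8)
The plan is to verify two things: that $\preceq$ is well defined as a total order, and that it satisfies the defining axiom of a reflection ordering; the second is where the normalization $\alpha\mapsto\bar\alpha$ does the work. First I would check totality. Since $f_1,\dots,f_n$ is a basis of $V^*$, the map $v\mapsto\bigl(f_1(v),\dots,f_n(v)\bigr)$ is a linear isomorphism $V\to\mathbb R^n$ and $\leq_{lex}$ is a total order on $\mathbb R^n$, so the only point to settle is antisymmetry, i.e. injectivity of $\alpha\mapsto\bar\alpha$ on $\Phi^+$. If $\bar\alpha=\bar\beta$, then $\alpha$ and $\beta$ are positive scalar multiples of each other; since all roots are unit vectors for the $W$-invariant bilinear form $B$ (each lies in the $W$-orbit of some simple root, the simple roots have unit norm, and $W$ preserves $B$), this forces $\alpha=\beta$. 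Hence $\preceq$ is a total order.

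For the reflection ordering axiom, fix $\alpha,\beta\in\Phi^+$ and $a,b\in\mathbb R_{>0}$ with $\gamma:=a\alpha+b\beta\in\Phi^+$. Writing $\alpha=\sum_s a_s\alpha_s$, $\beta=\sum_s b_s\alpha_s$, $A=\sum_s a_s$, $B=\sum_s b_s$, one has $A,B>0$ (the coefficients are nonnegative and not all zero) and the coordinate sum of $\gamma$ equals $aA+bB$, so
\[
\bar\gamma \;=\; \frac{a\alpha+b\beta}{aA+bB} \;=\; \frac{aA}{aA+bB}\,\bar\alpha \;+\; \frac{bB}{aA+bB}\,\bar\beta,
\]
which exhibits $\bar\gamma$ as a strict convex combination $\lambda\bar\alpha+(1-\lambda)\bar\beta$ with $\lambda\in(0,1)$. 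This is the crux of the argument: the normalization is chosen precisely so that a positive combination of roots which is again a root becomes, after normalization, a genuine convex combination of the normalized summands.

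To finish, I would use the elementary fact that for $u\neq w$ in $\mathbb R^n$ and $\lambda\in(0,1)$, the point $\lambda u+(1-\lambda)w$ lies strictly between $u$ and $w$ in lexicographic order: this follows by inspecting the first coordinate $k$ in which $u_k\neq w_k$, since all three vectors agree in the coordinates $<k$ and the $k$-th coordinate of the combination is strictly between $u_k$ and $w_k$. Applying the isomorphism $v\mapsto(f_i(v))_i$ to the displayed identity, and using that $\alpha\neq\beta$ implies $\bar\alpha\neq\bar\beta$ by the injectivity established above, yields $\alpha\preceq\gamma\preceq\beta$ or $\beta\preceq\gamma\preceq\alpha$, as required. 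The degenerate case $\alpha=\beta$ is immediate: then $\gamma=(a+b)\alpha$ is a positive multiple of $\alpha$, hence $\gamma=\alpha=\beta$ and the inequalities hold trivially.

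I do not expect a real obstacle here: once the convex-combination identity is noticed, everything else is formal bookkeeping. The only slightly non-formal input is the fact, used for totality, that two distinct positive roots are never proportional, which is part of the standard theory of the geometric representation.
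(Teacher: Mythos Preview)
Your argument is correct and follows the same line as the paper's: both hinge on the observation that $\bar\gamma$ is a strict convex combination of $\bar\alpha$ and $\bar\beta$, from which the lexicographic betweenness is immediate. The paper is terser---it asserts ``clearly $\bar\gamma=c\bar\alpha+(1-c)\bar\beta$ for some $c\in(0,1)$'' without writing out the coefficients, and then checks the coordinates one by one---whereas you compute $\lambda$ explicitly and also supply the verification that $\preceq$ is a genuine total order (injectivity of $\alpha\mapsto\bar\alpha$), which the paper leaves implicit.
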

\begin{proof}
 Let $\alpha, \beta, \gamma \in \Phi^+$, and $\gamma=a\alpha+b \beta$ with $a,b \in \mathbb R_{>0}$. Clearly  $\bar \gamma=c\bar \alpha+(1-c)\bar \beta$, for some $c\in (0,1)$.
 If $f_i(\bar \alpha)=f_i(\bar \beta)$ then 
 \[
  f_i(\bar \gamma)=cf_i(\bar \alpha)+(1-c)f_i(\bar \beta)=f_i(\bar \alpha)=f_i(\bar \beta),
 \]
 and if $f_i(\bar \alpha)<f_i(\bar \beta)$ then

\[
  f_i(\bar \gamma)=cf_i(\bar \alpha)+(1-c)f_i(\bar \beta) \begin{cases}
                                                             >cf_i(\bar \alpha)+(1-c)f_i(\bar \alpha)=f_i(\bar \alpha)\\<cf_i(\bar \beta)+(1-c)f_i(\bar \beta)=f_i(\bar \beta).
                                                            \end{cases}
\]
The result follows.
\end{proof}

We say that a subset $C$ of $V$ is a \emph{cone} if it is closed under nonnegative linear combinations of its elements. For $A\subset V$, we let $\Cone (A)$ be the set of all finite nonnegative linear combinations of elements in $A$. 
\begin{cor}

\label{C1C2}
 Let $C_1$ and $C_2$ be two cones in $V$. If $C_1\cap C_2=\{0\}$, then there exists a reflection ordering such that every root contained in $C_1$ is smaller than every root contained in $C_2$.
\end{cor}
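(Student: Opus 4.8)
The plan is to deduce this from Proposition~\ref{reflecorder}, so that it is enough to produce a basis $f_1,\ldots,f_n$ of $V^*$ such that
$$\bigl(f_1(\bar\alpha),\ldots,f_n(\bar\alpha)\bigr)<_{lex}\bigl(f_1(\bar\beta),\ldots,f_n(\bar\beta)\bigr)$$
whenever $\alpha\in\Phi^+\cap C_1$ and $\beta\in\Phi^+\cap C_2$. Set $K_i:=\Cone(\Phi^+\cap C_i)$. Each positive root lies in the simplicial cone $\Cone\{\alpha_s:s\in S\}$, which is pointed (i.e.\ contains no line), so $K_i$ is pointed; moreover $K_i\subseteq C_i$, hence $K_1\cap K_2=\{0\}$, and $\bar\alpha$ is a positive scalar multiple of $\alpha$, so $\bar\alpha\in K_1\setminus\{0\}$ when $\alpha\in\Phi^+\cap C_1$ and likewise $\bar\beta\in K_2\setminus\{0\}$ when $\beta\in\Phi^+\cap C_2$. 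Thus the whole statement reduces to the following separation fact: \emph{if $K_1,K_2$ are pointed convex cones in a finite dimensional real vector space $V$ with $K_1\cap K_2=\{0\}$, then there is a basis $f_1,\ldots,f_n$ of $V^*$ such that $(f_i(x))_i<_{lex}(f_i(y))_i$ for every $x\in K_1\setminus\{0\}$ and $y\in K_2\setminus\{0\}$.}

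I would prove this by induction on $n=\dim V$; the case $n=0$ and the case in which $K_1$ or $K_2$ equals $\{0\}$ are trivial (any basis works), so assume $n\ge 1$ and both cones nonzero. The crucial observation is that $K_1-K_2:=\{x-y:x\in K_1,\ y\in K_2\}$ is again a pointed convex cone: if $v$ and $-v$ both lay in it, writing $v=x_1-y_1$, $-v=x_2-y_2$ and adding gives $x_1+x_2=y_1+y_2\in K_1\cap K_2=\{0\}$, whence $x_1=x_2=y_1=y_2=0$ by pointedness and $v=0$. A pointed convex cone is a proper subset of $V$, and so is its closure (a convex set whose closure is $V$ must equal $V$), hence $\overline{K_1-K_2}$ is a proper closed convex cone and is therefore contained in a closed half-space $\{g\ge 0\}$ for some nonzero $g\in V^*$. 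Since $0\in K_1\cap K_2$ we have $K_1\subseteq K_1-K_2$ and $-K_2\subseteq K_1-K_2$, so with $f_1:=-g$ we get $f_1\le 0$ on $K_1$ and $f_1\ge 0$ on $K_2$. Now apply the inductive hypothesis in the hyperplane $N:=\ker f_1$ to the pointed cones $K_1\cap N$ and $K_2\cap N$ (which again meet only in $0$), obtaining $g_1,\ldots,g_{n-1}\in N^*$; lift these arbitrarily to $f_2,\ldots,f_n\in V^*$ and note that $f_1,\ldots,f_n$ is then a basis of $V^*$. A short case check completes the argument: given $x\in K_1\setminus\{0\}$ and $y\in K_2\setminus\{0\}$, if $f_1(x)<0$ then $f_1(x)<0\le f_1(y)$; if $f_1(x)=0$ and $f_1(y)>0$ we are again done on the first coordinate; and if $f_1(x)=f_1(y)=0$, then $x,y\in N$ and the inductive hypothesis applied to $g_1,\ldots,g_{n-1}$ decides the lexicographic comparison of the remaining coordinates.

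Feeding the basis so obtained into Proposition~\ref{reflecorder} yields a reflection ordering $\preceq$ with $\alpha\prec\beta$ for all $\alpha\in\Phi^+\cap C_1$ and $\beta\in\Phi^+\cap C_2$, which is exactly the assertion. I expect the main obstacle to be the reduction to \emph{pointed} cones — the cones in the statement need not be pointed — together with the fact that the Minkowski difference of two pointed cones meeting only at the origin is again pointed; this is precisely what guarantees, at each step of the recursion, the existence of the weakly separating functional $f_1$. Everything else is routine linear algebra and a finite case analysis.
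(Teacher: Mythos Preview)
Your argument is correct, and it takes a genuinely different route from the paper's. The paper proceeds in a single stroke: it invokes Minkowski's hyperplane separation theorem to produce $f\in V^*$ with $f<0$ on $C_1\setminus\{0\}$ and $f>0$ on $C_2\setminus\{0\}$, completes $f$ to an arbitrary basis of $V^*$, and applies Proposition~\ref{reflecorder}. You instead (i) replace the $C_i$ by the pointed subcones $K_i=\Cone(\Phi^+\cap C_i)$, (ii) extract only a \emph{weakly} separating functional $f_1$ from the pointedness of $K_1-K_2$, and (iii) recurse on $\ker f_1$ to resolve lexicographic ties.

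The extra care is not gratuitous. Strict linear separation of $C_1\setminus\{0\}$ from $C_2\setminus\{0\}$ is impossible as soon as one of the $C_i$ contains a line (if $v,-v\in C_1\setminus\{0\}$ then $f(v)<0$ and $f(-v)=-f(v)<0$ cannot both hold), so the paper's one-step argument does not literally cover the corollary in the generality stated. Your reduction to the $K_i$ restores pointedness, and the recursion then dispenses with strict separation altogether, which is what makes the proof go through even when $\Phi^+$ is infinite and the $K_i$ need not be closed or polyhedral. The price is a longer argument; in every application later in the paper the cones are generated by finitely many positive roots, hence pointed and polyhedral, $K_1-K_2$ is then closed and pointed, and a single strictly separating $f_1$ does exist---so the paper's shortcut is adequate there.
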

\begin{proof}
By Minkowski's Hyperplane Separation Theorem (see, for example, \cite[\S 2.5.1]{BV}), there exists a hyperplane that separates $C_1$ and $C_2$. This means that there exists $f\in V^*$ such that  $C_1\setminus\{0\}$ is contained in the open halfspace of equation $f< 0$ and $C_2\setminus\{0\}$ is contained in the open halfspace $f>0$. Now let $f_2,\ldots,f_n\in V^*$ be such that $\{f_1=f,f_2\ldots,f_n\}$ form a basis of $V^*$. With these data,  the order of Proposition~\ref{reflecorder} satisfies the required property.
\end{proof}
In \cite{TW}, Tsukerman and Williams prove the following result, showing that the Coxeter groups of type A satisfy what they call the generalized lifting property (see \cite[Theorem~3.3, Lemma~3.4]{TW}).
\begin{thm}[Tsukerman--Williams]
Let $W$ be of type $A$, and $u,v\in W$ with $u < v$. There exists a transposition $t$ such that $u \leq tv\lhd v$ and $u\lhd tu\leq v$. 
\end{thm}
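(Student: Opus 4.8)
The plan is to work inside $W=S_n$ in one-line notation and exhibit $t$ as an explicit transposition. Write a transposition in ``value'' form, $t=(p,q)$ with $1\le p<q\le n$, so that $tu$ is obtained from the word of $u$ by interchanging the entries $p$ and $q$; then $tu>u$ iff $p$ precedes $q$ in $u$, and $tv<v$ iff $q$ precedes $p$ in $v$. Call such a pair $(p,q)$ a \emph{crossing pair} of $(u,v)$. A crossing pair exists whenever $u<v$: since $\ell(u)<\ell(v)$, the inversion set of $v^{-1}$ cannot be contained in that of $u^{-1}$, and an inversion of $v^{-1}$ that is not one of $u^{-1}$ is precisely a crossing pair. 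Among all crossing pairs of $(u,v)$, I would choose $t=(p,q)$ with $q-p$ as small as possible, and claim that this $t$ satisfies the conclusion of the theorem.

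First I would establish the covering relations $u\lhd tu$ and $tv\lhd v$, which use only the minimality of $q-p$. Suppose $u\lhd tu$ fails; then there is a value $r$ with $p<r<q$ occurring, in the word of $u$, strictly between $p$ and $q$. Thus $p$ precedes $r$ and $r$ precedes $q$ in $u$, so by minimality neither $(p,r)$ nor $(r,q)$ is a crossing pair, which forces $p$ to precede $r$ and $r$ to precede $q$ also in $v$; hence $p$ precedes $q$ in $v$, contradicting that $(p,q)$ is a crossing pair. The relation $tv\lhd v$ follows by the symmetric argument (exchange $u$ and $v$ and reverse the order on values). In particular no value of the interval $[p,q]$ other than $p$ and $q$ occurs between the entries $p$ and $q$ in the word of $u$, and likewise in the word of $v$.

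The remaining and most delicate point is the pair of inequalities $tu\le v$ and $u\le tv$; here the hypothesis $u\le v$ is used. I would verify $tu\le v$ through the tableau (Ehresmann) criterion: for every $k$, the increasing rearrangement of $(tu)(1),\dots,(tu)(k)$ must be dominated entrywise by that of $v(1),\dots,v(k)$. Since $tu$ differs from $u$ only in the two positions occupied by $p$ and $q$, the multiset $\{(tu)(1),\dots,(tu)(k)\}$ coincides with $\{u(1),\dots,u(k)\}$ unless the position of $p$ in $u$ is $\le k$ while the position of $q$ in $u$ is $>k$, in which case the former is obtained from the latter by replacing the entry $p$ with the larger entry $q$; for all other $k$ the desired inequality is exactly the one provided by $u\le v$. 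For the critical range of $k$ one uses the ``nothing of $[p,q]$ in between'' fact recorded above, together with the minimality of $q-p$ (which pins down, both in $u$ and in $v$, on which side of $k$ the values strictly between $p$ and $q$ must lie), to check that upgrading $p$ to $q$ on the left preserves domination by $v(1),\dots,v(k)$. The inequality $u\le tv$ is obtained symmetrically.

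The main obstacle is exactly this last step. In contrast with the classical lifting property for a \emph{simple} reflection, the inequalities $tu\le v$ and $u\le tv$ do not hold for an arbitrary crossing pair $t$ — one can produce $u<v$ in $S_4$ together with a non-minimal crossing pair $t$ for which $tu\not\le v$ — so the minimality of $q-p$ must be exploited in an essential way in the bookkeeping of the tableau criterion. This is also the point at which type $A$ enters genuinely, consistently with the result of \cite{CS} that the generalized lifting property holds precisely for the finite simply laced Coxeter groups.
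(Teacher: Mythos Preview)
The paper does not give its own proof of this statement; it is quoted from \cite{TW} (their Theorem~3.3 and Lemma~3.4) as background for the subsequent discussion.  Your strategy --- choose a crossing pair $(p,q)$ with $q-p$ minimal --- is exactly the Tsukerman--Williams strategy, and your argument for the two covering relations $u\lhd tu$ and $tv\lhd v$ is correct and complete.

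The sketch of $tu\le v$ (and, symmetrically, $u\le tv$) is where the writeup stops short of a proof, and one formulation in it is not quite right.  You say minimality ``pins down, both in $u$ and in $v$, on which side of $k$ the values strictly between $p$ and $q$ must lie''.  For $u$ this is true (each such $r$ sits either left of $u^{-1}(p)$ or right of $u^{-1}(q)$, hence on a fixed side of every $k$ in the critical range), but for $v$ it is false in general: if $k\ge v^{-1}(p)$, a value $r\in(p,q)$ lying to the right of $\{p,q\}$ in $v$ can still satisfy $v^{-1}(r)\le k$.  What minimality actually yields --- by the same reasoning you used for the covers --- is that each $r\in(p,q)$ lies on the \emph{same side of the block $\{p,q\}$} in $u$ as in $v$, i.e.\ $u^{-1}(r)<u^{-1}(p)\iff v^{-1}(r)<v^{-1}(q)$.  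Feeding this into the tableau criterion, one checks that the difference $m\mapsto |u[k]\cap[1,m]|-|v[k]\cap[1,m]|$ (which is $\ge 0$ by $u\le v$) is strictly positive for every $m\in[p,q-1]$; this requires splitting into the three cases $k<v^{-1}(q)$, $v^{-1}(q)\le k<v^{-1}(p)$, $k\ge v^{-1}(p)$ and using the ``same side'' fact in each.  The argument goes through, but it is a genuine case analysis that your outline does not supply.
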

 In \cite{CS}, Caselli and Sentinelli provide a characterization of the Coxeter groups satisfying the generalized lifting property (\cite[Theorem~6.9]{CS}).
  \begin{thm}[Caselli--Sentinelli]
  \label{generalizzatacaratt}
Let $(W, S)$ be a Coxeter system. The following are equivalent.
\begin{itemize}
\item For all $u,v\in W$ with $u < v$, there exists a reflection $t$ such that $u \leq tv\lhd v$ and $u\lhd tu\leq v$. 
\item The Coxeter system $(W, S)$ is finite and simply laced.
\end{itemize}  
 \end{thm}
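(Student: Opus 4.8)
\emph{Proof plan.} The plan is to prove the two implications separately. For the forward direction (finite simply laced $\Rightarrow$ the property) I would argue by induction on $\ell(u)+\ell(v)$, using the left descent set $D_L(w)=\{s\in S:\ell(sw)<\ell(w)\}$, the Lifting Property \cite[Prop.~2.2.7]{BB} (if $u\le v$ and $s\in D_L(v)\setminus D_L(u)$, then $su\le v$ and $u\le sv$), and the order preservation of $w\mapsto\min\{w,sw\}$ and $w\mapsto\max\{w,sw\}$ for fixed $s\in S$. For $u<v$ I write $A(u,v)=\{t\in T:u\lhd tu\le v\}$ and $Z(u,v)=\{t\in T:u\le tv\lhd v\}$, so that the property for $(u,v)$ reads $A(u,v)\cap Z(u,v)\neq\emptyset$; note that, for $W$ finite, this condition is invariant under $(u,v,t)\mapsto(w_0v,w_0u,w_0tw_0)$.

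First I would dispose of the case where some simple reflection $s$ lies in $D_L(v)\setminus D_L(u)$: then the Lifting Property gives $u\lhd su\le v$ and $u\le sv\lhd v$, so $t=s$ works (this case needs neither hypothesis). Otherwise $D_L(v)\subseteq D_L(u)$; picking $s\in D_L(v)$ one has $s\in D_L(u)\cap D_L(v)$, hence $su<sv$ with strictly smaller length sum, so by induction there is $r\in A(su,sv)\cap Z(su,sv)$. The natural candidate for $(u,v)$ is $t=srs$: using $su\le sv\le v$ and the monotonicity of $\max$, one checks that $tu=s\cdot r(su)$ is then an atom of $[u,v]$ and $tv=s\cdot r(sv)$ a coatom, so that $t\in A(u,v)\cap Z(u,v)$ --- \emph{provided} $s$ is a left descent of neither $r(su)$ nor $r(sv)$.

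The hard part will be the residual case, where $s\in D_L(r(su))$ or $s\in D_L(r(sv))$: there $t=srs$ has the wrong length and must be replaced. I would handle this through a close analysis of the rank-$2$ Bruhat interval (``diamond'') spanned by $su$, $r(su)$ and $s\,r(su)$, together with its dual at the top, sliding to a common lower cover or passing to the $w_0$-dual pair. This is exactly where I expect to need both hypotheses: the simply-laced condition forbids ``$I_2(m)$-type'' Bruhat diamonds with $m\ge 4$ --- the diamonds responsible for the failure of the property in $I_2(m)$ --- and finiteness supplies the $w_0$-duality invoked here. (A complementary idea: by Corollary~\ref{C1C2}, if the cones on $\{\alpha_t:t\in Z(u,v)\}$ and on $\{\alpha_t:t\notin Z(u,v)\}$ intersect only in $0$, there is a reflection ordering in which the elements of $Z(u,v)$ precede all others; then the first reflection $t_1$ of the unique increasing chain of $[u,v]$ (Proposition~\ref{unasola}) lies in $A(u,v)$, and since $t_1$ precedes the top label of that chain, which lies in $Z(u,v)$, also $t_1\in Z(u,v)$. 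One is then left to verify this cone condition in finite simply-laced type.)

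For the converse I would use that the property passes to parabolic subgroups: if $u,v\in W_J$ then every $x\le v$ lies in $W_J$ by the subword property, so a witnessing reflection for $(u,v)$ already lies in $W_J$. Hence simply-lacedness follows once I exhibit, whenever $ss'$ has order $m\ge 4$, a failing pair in $W_{\{s,s'\}}\cong I_2(m)$; the pair $u=s$, $v=ss's$ does, since $A(s,ss's)=\{ss's,s'\}$ while $Z(s,ss's)=\{s,\rho\}$ for some reflection $\rho\notin\{s',ss's\}$, and these are disjoint. For finiteness, if $W$ were infinite (and now known simply laced) I would take $J$ minimal with $W_J$ infinite; then $W_J$ is a minimal infinite simply-laced Coxeter group, i.e.\ an irreducible affine group of type $\widetilde A$, $\widetilde D$ or $\widetilde E$, and I would exhibit a failing pair in each of these (again a finite computation of the above kind, or a direct construction of $u<v$ with $A(u,v)\cap Z(u,v)=\emptyset$). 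The one genuinely delicate step is the residual case of the first implication.
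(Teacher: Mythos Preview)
The paper does not prove this theorem: it is quoted as \cite[Theorem~6.9]{CS} and used as background, so there is no ``paper's own proof'' to compare your plan against. What remains is to assess your plan on its merits.

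Your outline for the forward implication is reasonable up to the point you yourself flag: when $D_L(v)\subseteq D_L(u)$ and the inductive witness $r$ for $(su,sv)$ satisfies $s\in D_L(r(su))$ or $s\in D_L(r(sv))$, the candidate $t=srs$ fails and you have not said how to repair it. The ``diamond'' sketch and the alternative cone idea are both suggestive but neither is an argument: for the cone idea you would need to show, uniformly in finite simply-laced type, that $\Cone\{\alpha_t:t\in Z(u,v)\}\cap\Cone\{\alpha_t:t\notin Z(u,v)\}=\{0\}$, and you give no indication why this holds (it is essentially a reformulation of what you want). So the forward direction has a genuine gap exactly where you say it does.

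For the converse, the dihedral counterexample is correct and shows $m_{s,s'}\le 3$ for all $s\neq s'$. Your reduction of the finiteness claim to ``minimal infinite simply-laced parabolics are $\widetilde A_n,\widetilde D_n,\widetilde E_{6,7,8}$'' is right (this is the classical characterization of minimal non-$ADE$ simply-laced diagrams), but you then owe a failing pair in each of these. For $\widetilde E_{6,7,8}$ that is a finite check; for $\widetilde A_n$ and $\widetilde D_n$ there are infinitely many ranks, so ``a finite computation'' does not suffice --- you need a uniform construction. As written, the converse is therefore also incomplete. If you want to carry this plan through, the substantive work is (i) a replacement for $srs$ in the residual case (this is where the simply-laced hypothesis must really bite, via the structure of rank-$2$ reflection subgroups), and (ii) explicit uniform counterexamples in the affine $A$ and $D$ families.
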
 
 The generalized lifting property plays a fundamental role in the study of Bruhat interval polytopes of the symmetric group in \cite{TW} (actually, this is the main reason why it has been discovered). In the study of Bruhat interval polytopes of arbitrary finite Coxeter group in the next sections, we make use of the following weaker property, which holds in every Coxeter group.

\begin{thm}[Weak generalized lifting property]\label{intersectcones}
Let $(W, S)$ be an arbitrary Coxeter system. Let $u,v\in W$ with $u < v$, $R=\{\alpha_t\in \Phi^+:\, u\leq tv\lhd v\}$, and $R'=\{\alpha_t\in \Phi^+:\, u \lhd tu \leq v \}$. Then $\Cone (R)\cap \Cone (R')\neq \{0\}$. 
\end{thm}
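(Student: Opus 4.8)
The plan is to argue by contradiction using the theory of reflection orderings. Suppose $\Cone(R) \cap \Cone(R') = \{0\}$. By Corollary~\ref{C1C2}, there is then a reflection ordering $\preceq$ in which every root of $R$ is strictly smaller than every root of $R'$ (or vice versa — one should fix one of the two cases, say all roots of $R'$ precede all roots of $R$, and derive a contradiction; the other case is symmetric, or can be handled by reversing the reflection ordering). The idea is that $R$ records the coatoms of $[u,v]$ — the edges $tv \lhd v$ at the top of the interval with $tv \geq u$ — while $R'$ records the atoms — the edges $u \lhd tu$ at the bottom with $tu \leq v$. A reflection ordering in which atom-labels all precede coatom-labels should be incompatible with the existence of an increasing chain from $u$ to $v$, whose bottom edge is an atom and whose top edge is a coatom.

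\textbf{Key steps.} First I would invoke Proposition~\ref{unasola}: for the chosen reflection ordering $\preceq$, there is a unique increasing complete chain
$u = x_0 \lhd x_1 \lhd \cdots \lhd x_\ell = v$,
with edge labels $t_1 \preceq t_2 \preceq \cdots \preceq t_\ell$, where $x_i = t_i x_{i-1}$. Since $u < v$, this chain has length $\ell \geq 1$. The bottom edge gives a reflection $t_1$ with $u \lhd t_1 u \leq v$, so $\alpha_{t_1} \in R'$. The top edge is a coatom: $x_{\ell-1} \lhd v$, i.e. $t_\ell x_{\ell-1} = v$, so $x_{\ell-1} = t_\ell v$; moreover $x_{\ell-1} = t_\ell v \geq x_0 = u$, hence $u \leq t_\ell v \lhd v$ and $\alpha_{t_\ell} \in R$. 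Now, if $\ell \geq 2$ we have $\alpha_{t_1} \in R'$, $\alpha_{t_\ell} \in R$, and $t_1 \preceq t_\ell$, contradicting that every root of $R'$ is $\succ$ every root of $R$. If $\ell = 1$, then $u \lhd v$ and the single edge label $t_1$ satisfies both $\alpha_{t_1} \in R$ (as $u = t_1 v \lhd v$ and trivially $u \leq t_1 v$) and $\alpha_{t_1} \in R'$ (as $u \lhd t_1 u = v \leq v$), so $\alpha_{t_1} \in \Cone(R) \cap \Cone(R')$ is a nonzero element, again a contradiction. This disposes of one of the two cases from Corollary~\ref{C1C2}; the other (all roots of $R$ precede all roots of $R'$) is handled identically — the same increasing chain gives $\alpha_{t_1} \in R'$ below $\alpha_{t_\ell} \in R$ with $t_1 \preceq t_\ell$, which now is not immediately a contradiction, so for this case one should instead use the \emph{decreasing} chain (equivalently, apply Proposition~\ref{unasola} to the reversed reflection ordering $\preceq^{\mathrm{op}}$, which is again a reflection ordering): its bottom edge is still an atom in $R'$ and its top edge a coatom in $R$, but now the top label precedes the bottom label in $\preceq$, contradicting $R \prec R'$.

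\textbf{Main obstacle.} The routine combinatorics above is not where the difficulty lies; the crux is the verification that $\alpha_{t_1} \in R'$ and $\alpha_{t_\ell} \in R$ for an increasing chain — i.e. that the bottom step of \emph{any} saturated chain from $u$ to $v$ is an atom of $[u,v]$ whose label lies in $R'$, and the top step a coatom whose label lies in $R$. The membership $u \lhd t_1 u$ and $t_1 u \leq v$ is immediate since $t_1 u = x_1$ sits on the chain; likewise $t_\ell v = x_{\ell-1} \geq u$ and $t_\ell v \lhd v$. So in fact this too is clean, and the genuine content is entirely packaged inside the two black-box results: Corollary~\ref{C1C2} (reflection orderings that separate disjoint cones) and Proposition~\ref{unasola} (existence of a unique increasing chain). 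The only subtlety to be careful about is handling the two alternatives produced by the separation theorem and the degenerate case $\ell = 1$; I would make sure to phrase the contradiction so that it works uniformly, e.g. by noting that the increasing chain from $u$ to $v$ always exhibits some $\alpha_{t} \in R'$ and some $\alpha_{t'} \in R$ with $t \preceq t'$, so that an ordering separating the cones in \emph{either} direction, applied to $\preceq$ or to $\preceq^{\mathrm{op}}$ as appropriate, is contradicted.
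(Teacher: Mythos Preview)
Your proposal is correct and follows exactly the paper's approach: assume the cones are disjoint, invoke Corollary~\ref{C1C2} to produce a reflection ordering separating $R$ from $R'$, and observe that the unique increasing chain guaranteed by Proposition~\ref{unasola} has its bottom label in $R'$ and its top label in $R$, yielding a contradiction.

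One simplification: your ``two cases'' worry is unnecessary. Corollary~\ref{C1C2} is stated with a chosen pair $(C_1,C_2)$, and you are free to apply it with $C_1=\Cone(R)$ and $C_2=\Cone(R')$; this directly produces an ordering with every root of $R$ smaller than every root of $R'$, and then the increasing chain (whose bottom label lies in $R'$ and top label in $R$, with bottom $\preceq$ top) is immediately impossible. There is no dichotomy to resolve and no need to pass to $\preceq^{\mathrm{op}}$. The paper's proof is accordingly one sentence long.
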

\begin{proof}
 If $\Cone (R)\cap \Cone (R')=\{0\}$, then by Corollary~\ref{C1C2} there exists a reflection ordering $\preceq $ such that all roots in $R$ are smaller than all roots in $R'$. In particular, there would be no increasing paths from $u$ to $v$, and this contradicts Proposition~\ref{unasola}. 
\end{proof}

The following problem asks for a result that is slightly stronger than the weak generalized lifting property. 

\begin{probl}
\label{cisono}
Given a Bruhat interval  $[u,v]$, we let  $R=\{\alpha_t\in \Phi^+:\, u\leq vt\lhd v\}$ and $R'=\{\alpha_t\in \Phi^+:\, u \lhd ut \leq v \}$. For which intervals, the intersections  $R\cap \Cone (R') $ and $R'\cap \Cone (R)$ are non-empty?
\end{probl}

We know that such intersections are non-empty for intervals in  finite simply laced Coxeter groups (by Theorem~\ref{generalizzatacaratt}). The next results show that this is the case also for dihedral Coxeter groups, and for all intervals of length 2. 
\begin{pro}
\label{valeperdiedrali}
Let $(W,S)$ be a dihedral Coxeter system. Let $u,v\in W$ with $u<v$,   $R=\{\alpha_t\in \Phi^+:\, u\leq vt\lhd v\}$, and $R'=\{\alpha_t\in \Phi^+:\, u \lhd ut \leq v \}$.  The intersections  $R\cap \Cone (R') $ and $R'\cap \Cone (R)$ are non-empty.
\end{pro}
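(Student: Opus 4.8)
The plan is to reduce to the ``left multiplication'' variant of the statement, to identify the roots in question with labels of edges of the Hasse diagram, and to conclude by a short case analysis based on the geometry of rank~$2$ root systems.

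\emph{Step 1 (reduction).} The map $x\mapsto x^{-1}$ is an automorphism of the Bruhat order that fixes every reflection; using $(vt)^{-1}=tv^{-1}$, $(ut)^{-1}=tu^{-1}$, $\ell(w)=\ell(w^{-1})$ and the invariance of Bruhat order under inversion, one checks at once that
\[
u\le vt\lhd v\iff u^{-1}\le tv^{-1}\lhd v^{-1},\qquad u\lhd ut\le v\iff u^{-1}\lhd tu^{-1}\le v^{-1}.
\]
Hence, applying the desired conclusion to the interval $[u^{-1},v^{-1}]$, it is enough to prove: for every interval $[u,v]$ of a dihedral group, the sets $R_L=\{\alpha_t:u\le tv\lhd v\}$ and $R'_L=\{\alpha_t:u\lhd tu\le v\}$ satisfy $R_L\cap\Cone(R'_L)\neq\emptyset$ and $R'_L\cap\Cone(R_L)\neq\emptyset$. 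The point of this left-multiplication form is that, for a covering $x\lhd y$, the reflection $t$ with $y=tx$ is precisely the edge label $\alpha_{x\lhd y}$; thus $R_L$ (resp.\ $R'_L$) is the set of labels of the Hasse edges joining $v$ (resp.\ $u$) to the coatoms (resp.\ atoms) of the interval $[u,v]$.

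\emph{Step 2 (dihedral bookkeeping).} Write $W=\langle s,r\rangle$, $|W|=2m$ (the case $m=\infty$ is handled in the same way), and let $[\ell]_s,[\ell]_r$ denote the two elements of length $\ell$, with $[0]_s=[0]_r=e$ and, when $m<\infty$, $[m]_s=[m]_r=w_0$. In a dihedral group one has $x\le y$ iff $x=y$ or $\ell(x)<\ell(y)$; consequently the coatoms of $[u,v]$ are the elements of length $\ell(v)-1$ that are $\ge u$, and the atoms are the elements of length $\ell(u)+1$ that are $\le v$. Fix the reflection ordering $\beta_0\prec\beta_1\prec\cdots\prec\beta_{m-1}$ with $\beta_0=\alpha_s$ and $\beta_{m-1}=\alpha_r$ (in rank~$2$ the reflection orderings are the two monotone orderings of $\Phi^+$, cf.\ \cite[Proposition~5.2.1]{BB} and Proposition~\ref{reflecorder}), and write $t_i$ for the reflection with $\alpha_{t_i}=\beta_i$, so that $t_i=(sr)^is$. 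Peeling off the first letter gives $[\ell]_s=s\cdot[\ell-1]_r$ and $[\ell]_r=r\cdot[\ell-1]_s$, so the crossing edges $[\ell-1]_r\lhd[\ell]_s$ and $[\ell-1]_s\lhd[\ell]_r$ carry the labels $\alpha_s=\beta_0$ and $\alpha_r=\beta_{m-1}$ respectively; a short word computation---equivalently, the observation that the labels of the maximal chain $e\lhd[1]_s\lhd[2]_s\lhd\cdots$, read from the bottom, form the inversion sequence of the reduced word $srsr\cdots$ of $w_0$, which is a reflection ordering---shows that the straight edges $[\ell-1]_s\lhd[\ell]_s$ and $[\ell-1]_r\lhd[\ell]_r$ carry the labels $t_{\ell-1}=\beta_{\ell-1}$ and $t_{m-\ell}=\beta_{m-\ell}$ (the latter by the $s\leftrightarrow r$ diagram automorphism, which reverses the reflection ordering).

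\emph{Step 3 (conclusion).} By the $s\leftrightarrow r$ automorphism we may assume $v=[k]_s$. If $\ell(v)-\ell(u)=1$ then $R_L=R'_L$ is a single root and there is nothing to prove. Otherwise $j:=\ell(u)\le k-2$, every length-$(k-1)$ element is a coatom of $[u,v]$, and by Step~2 we get $R_L=\{\beta_0,\beta_{k-1}\}$, while $R'_L$ equals $\{\beta_0,\beta_{m-1}\}$ if $u=e$, equals $\{\beta_j,\beta_{m-1}\}$ if $u=[j]_s$ with $j\ge 1$, and equals $\{\beta_0,\beta_{m-j-1}\}$ if $u=[j]_r$ with $j\ge 1$. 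In the first and third cases $\beta_0\in R_L\cap R'_L$. In the remaining case $u=[j]_s$ with $1\le j\le k-2$: since $V$ is two-dimensional the normalized roots $\bar\beta_i$ are collinear and monotone in the reflection ordering, whence $\Cone(\beta_a,\beta_b)\cap\Phi^+=\{\beta_i:a\le i\le b\}$ for $a\le b$; as $0\le j\le k-1\le m-1$, this yields $\beta_{k-1}\in R_L\cap\Cone(R'_L)$ and $\beta_j\in R'_L\cap\Cone(R_L)$, which finishes the proof.

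The main obstacle is Step~2, namely the exact matching of the Hasse edge labels with a fixed reflection ordering---in particular the verification that the straight edges carry the labels $t_{\ell-1}$ (respectively $t_{m-\ell}$); once this, together with the elementary rank-$2$ convexity fact used in Step~3, is in place, the case analysis is immediate.
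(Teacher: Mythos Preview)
Your proof is correct and follows essentially the same route as the paper: an explicit dihedral case analysis that pins down the coatom labels $R$ and the atom labels $R'$ and then invokes the linear ordering of positive roots in rank~$2$ to locate one of them in the cone of the other. Your extra Step~1 (passing to $[u^{-1},v^{-1}]$ so that the relevant reflections become the \emph{left} edge labels $\alpha_{x\lhd y}$) and your systematic use of a fixed reflection ordering $\beta_0\prec\cdots\prec\beta_{m-1}$ are organizational conveniences rather than a different strategy; the paper works directly with right multiplication and the notation $s^{(k)}$, reaching the same dichotomy ``same first letter / different first letter'' that you phrase as $u=[j]_s$ versus $u=[j]_r$. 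The one point where the paper is more explicit is $m=\infty$: there it spells out exactly which $\gamma$ lie in $\Cone(\{\alpha,\beta\})$ for the various placements of $\alpha,\beta$ on the two infinite ``branches'' of $\Phi^+$, whereas you only assert parenthetically that the infinite case ``is handled in the same way''; since your Step~3 literally uses $\beta_{m-1}$, you should at least indicate how the labels $\beta_{m-1}$ and $\beta_{m-j-1}$ are to be reinterpreted (as the $r$-branch roots $(rs)^0r$ and $(rs)^jr$) before declaring the argument to go through unchanged.
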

\begin{proof}
Let $S=\{s,t\}$ and $m$ be the order of the product $st$ in $W$. 

We first suppose that $m$ is finite. For all $k=1,3,\ldots,2m-1$, let $s^{(k)}=sts\cdots s$ with $k$ factors, and  define $t^{(k)}$ similarly. Observe that $t^{(2m-k)}=s^{(k)}$, so the  reflections of $W$ are $$s^{(1)},s^{(3)},\ldots,s^{(2m-1)}.$$
 The corresponding roots appear in this order going from $\alpha_s$ to $\alpha_t$ (either clockwise or counterclockwise) in the 2-dimensional space $V$.

If $u$ is  the identity $e$, then the statement is trivial. Without loss of generality, we suppose  $u\neq e$ and that $s$ is the first letter of the unique reduced expression of $u$, so $u=sts \cdots $ with $k$ factors, for a certain $k\in  \{1, \ldots, m-1\}$. We may suppose that $v$ is not the longest element and that $s$ is the first letter of the unique reduced expression of $v$, since otherwise $t\in R\cap R'$. Let $v=sts\cdots $ with $j$ factors, for a certain $j\in  \{k+1, \ldots, m-1\}$. Since $R=\{t, s^{(2k+1)}\}$ and $R'=\{s, s^{(2j-1)}\}$, the assertion is proved.
 
Now suppose $m=\infty$. As above, let $s^{(k)}=sts\cdots s$ with $k$ factors, and   $t^{(k)}=tst\cdots t$ with $k$ factors, for $k$ odd. As $k$ varies over the odd integers, these reflections are different and cover all reflections of $W$. 
Let $\alpha,\beta,\gamma \in \Phi^+$: the root $\gamma$ belongs to $ \Cone (\{\alpha,\beta\})$ if and only if either
\begin{itemize}
\item $\alpha= s^{(k)}$, $\beta=s^{(j)}$, $\gamma=s^{(i)}$ with $k\leq i \leq j$, or
\item $\alpha= t^{(k)}$, $\beta=t^{(j)}$, $\gamma=t^{(i)}$ with $k\leq i \leq j$, or
\item $\alpha= s^{(k)}$, $\beta=t^{(j)}$, $\gamma=s^{(i)}$ with $k\leq i$, or
\item $\alpha= s^{(k)}$, $\beta=t^{(j)}$, $\gamma=t^{(i)}$ with $j\leq i$, 
\end{itemize}
(up to changing the role of $\alpha$ and $\beta$). Hence we can proceed   as in the case $m$ finite.
\end{proof}
 
 Statement (1) of the following proposition is well-known: we include it just for reference.
 
\begin{pro}\label{quadratisi}
Let $(W,S)$ be an arbitrary Coxeter system. Let $u,v,x,y$ and $t_i$, for $i=1,2,3,4$, be as in Figure \ref{fig1}, i.e. $u\lhd x\lhd v$, $u\lhd y\lhd v$, $x\neq y$, $x=t_1u$, $v=t_3x$, $v=t_4y$, $y=t_2u$. 
 We have 
 \begin{enumerate}
  \item 
  $Span\{\alpha_{t_1},\alpha_{t_3}\}=Span\{\alpha_{t_2},\alpha_{t_4}\}=Span\{\alpha_{t_1},\alpha_{t_2}\}=Span\{\alpha_{t_3},\alpha_{t_4}\}$,
  \item $\{\alpha_{t_1},\alpha_{t_2}\}\cap \Cone(\{\alpha_{t_{3}},\alpha_{t_4}\})\neq \emptyset$,
  \item $\{\alpha_{t_3},\alpha_{t_4}\}\cap \Cone(\{\alpha_{t_1},\alpha_{t_2}\})\neq \emptyset$.
  \end{enumerate}
\end{pro}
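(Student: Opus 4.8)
The plan is to reduce the whole proposition to the case of a diamond inside a dihedral group, where it can be settled by a direct computation of the same kind as in the proof of Proposition~\ref{valeperdiedrali}. The part of (1) that is purely formal is the $2$-dimensionality of the four spans: in each of the pairs $\{t_1,t_3\}$, $\{t_2,t_4\}$, $\{t_1,t_2\}$, $\{t_3,t_4\}$ the two reflections are distinct, because $t_1=t_3$ forces $v=t_3t_1u=u$, $t_1=t_2$ forces $x=y$, $t_3=t_4$ forces $x=t_3v=t_4v=y$, and $t_2=t_4$ forces $v=t_4y=t_2t_2u=u$; hence the two positive roots in each pair are linearly independent. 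So the real content of (1) is that the four roots $\alpha_{t_1},\dots,\alpha_{t_4}$ are coplanar, equivalently that $t_1,\dots,t_4$ lie in a common dihedral reflection subgroup of $W$.

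To establish that, I would put $W':=\langle t_1,t_3\rangle$, which is dihedral since $t_1\neq t_3$, and observe that $u$, $x=t_1u$, $v=t_3t_1u$ all lie in the left coset $W'u$. By Dyer's description of Bruhat order on cosets of reflection subgroups, the Bruhat order of $W$ restricted to $W'u$ is isomorphic, via $w'\mapsto w'\hat u$ with $\hat u:=\min W'u$, to the Bruhat order of $W'$; write $u=a\hat u$, $x=b\hat u$, $v=c\hat u$, so that $a\lhd b\lhd c$ in $W'$ and $t_1=ba^{-1}$, $t_3=cb^{-1}$. A length-$2$ interval in the dihedral group $W'$ has exactly two middle elements (immediate from the explicit ``ladder'' shape of the Bruhat order of a dihedral group); let $b'$ be the one other than $b$. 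Then $b'\hat u$ is an element of $W'u$ distinct from $x$ which lies strictly between $u$ and $v$ in the Bruhat order of $W$; being strictly between, it has length $\ell(u)+1$, hence it is a middle element of $[u,v]$ in $W$. Since a length-$2$ Bruhat interval of $W$ also has exactly two middle elements and $x,y$ are two of them, we must have $b'\hat u=y$. Therefore $y\in W'u$, so $t_2=yu^{-1}=b'a^{-1}\in W'$ and $t_4=vy^{-1}=c(b')^{-1}\in W'$: all four reflections lie in the dihedral group $W'$. In particular the four roots lie in the $2$-plane $P$ spanned by the two simple roots of $W'$, and since each of the four $2$-dimensional spans is contained in $P$, they all equal $P$, proving (1). (When $W'$ is finite the classical ``exactly two middle elements'' input can be avoided altogether: $\sigma:=vu^{-1}=t_3t_1=t_4t_2$ is then a nontrivial rotation, hence $\ker(\sigma-\mathrm{id})=\Span\{\alpha_{t_1},\alpha_{t_3}\}^{\perp}=\Span\{\alpha_{t_2},\alpha_{t_4}\}^{\perp}$, giving $\Span\{\alpha_{t_1},\alpha_{t_3}\}=\Span\{\alpha_{t_2},\alpha_{t_4}\}$ directly.)

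By the previous step the diamond of Figure~\ref{fig1} becomes, via the isomorphism $W'u\cong W'$, a length-$2$ interval $a\lhd b\lhd c$, $a\lhd b'\lhd c$ in the dihedral group $W'$, carrying the same four reflections $t_1=ba^{-1}$, $t_2=b'a^{-1}$, $t_3=cb^{-1}$, $t_4=c(b')^{-1}$; so it suffices to prove (2) and (3) when $W$ itself is dihedral. There I would argue exactly as in the proof of Proposition~\ref{valeperdiedrali}: list the positive roots of $W$ in their angular order $\beta_1\preceq\cdots\preceq\beta_m$ with $m\le\infty$, use that the positive roots lying in $\Cone\{\beta_i,\beta_j\}$ are precisely the $\beta_k$ with $i\le k\le j$, identify $\alpha_{t_1},\alpha_{t_2}$ (attached to the two atoms) and $\alpha_{t_3},\alpha_{t_4}$ (attached to the two coatoms) among the $\beta$'s from the explicit shape of the dihedral Bruhat order, and check directly that the two atom-roots interleave the two coatom-roots in such a way that each of the two pairs meets the cone spanned by the other. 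This is a short case analysis, according to whether $m$ is finite or infinite and to the length of $a$ in $W'$.

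The step I expect to be the genuine obstacle is the reduction itself --- showing $t_2,t_4\in\langle t_1,t_3\rangle$; once that is in hand, (1) is immediate and (2)--(3) are a routine dihedral verification parallel to Proposition~\ref{valeperdiedrali}.
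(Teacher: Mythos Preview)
Your proposal is correct and follows essentially the same route as the paper. For (1) the paper simply cites Dyer \cite[Lemma~3.1]{D1991}, and for (2)--(3) it invokes Dyer's theory of reflection subgroups to reduce to the dihedral case and then applies Proposition~\ref{valeperdiedrali}; you carry out exactly this reduction, but make it explicit by showing $t_2,t_4\in\langle t_1,t_3\rangle$ via the coset argument and the ``two middle elements'' property, and then propose redoing the dihedral computation of Proposition~\ref{valeperdiedrali} rather than citing it. The extra detail you supply on the reduction is precisely the content hidden behind the paper's citations, so there is no substantive difference in strategy.
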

\begin{proof}For the proof of (1), see the proof of \cite[Lemma~3.1]{D1991}.
The statements (2) and (3) follows by Proposition~\ref{valeperdiedrali}, since Dyer's works on reflection subgroups (see \cite[Theorem~1.4]{D1991}, for example) imply that we may suppose $W$  dihedral.
\end{proof}

\section{Bruhat intervals are Coxeter matroids}
\label{intervallicoxetermatroids}
Let $(W,S)$ be an arbitrary finite Coxeter system. In this section, we prove that, for any $u,v\in W$ with $u\leq v$, the Bruhat  interval $[u,v]$ is a Coxeter matroid. This fact is a generalization of results of \cite{KW,TW}.


%

Recall Definition~\ref{politopo}, where $p$ is a  fixed appropriate base point. Following \cite{TW}, given a Bruhat interval $[u,v]$, we call the associated polytope $\Delta_{[u,v]}$  a \emph{Bruhat interval polytope}.

Next result is a direct geometric consequence of the weak generalized lifting property (Theorem~\ref{intersectcones}).
\begin{pro}
\label{catena}
Let $W$ be a finite Coxeter group, and $u,v\in W$ with $u\leq v$. Let $F$ be a face of the Bruhat interval polytope $\Delta_{[u,v]}(p)$. If $F$ contains $x(p)$ and $y(p)$ for some $x,y \in W$ with $u\leq x\leq y\leq v$, then there exists a complete chain $C$ from $x$ to $y$ such that $z(p) \in F$ for all $z\in C$.\end{pro}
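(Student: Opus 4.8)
The plan is to reduce the general statement to a single application of the weak generalized lifting property by an induction on $\ell(y)-\ell(x)$. Let me think about the geometry first. A face $F$ of the polytope $\Delta_{[u,v]}(p)$ is cut out by a linear functional: there is $\phi\in V^*$ such that $F=\{q\in\Delta_{[u,v]}(p):\phi(q)=M\}$ where $M=\max_{z\in[u,v]}\phi(z(p))$. Saying $x(p),y(p)\in F$ means $\phi(x(p))=\phi(y(p))=M$ and this is the maximum value of $\phi$ over all $z(p)$, $z\in[u,v]$. What I want is a complete chain from $x$ to $y$ inside the interval $[u,v]$ all of whose vertices are also maximizers of $\phi$. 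The base case $\ell(y)-\ell(x)\le 1$ is trivial. For the inductive step it suffices to produce a single coatom $z$ of $[x,y]$ (so $x\le z\lhd y$) with $z(p)\in F$; then the chain from $x$ to $z$ is obtained by induction and we append the edge $z\lhd y$.

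So the heart of the matter is: given $x<y$ with $x(p),y(p)\in F$, find a reflection $t$ with $x\le ty\lhd y$ and $ty(p)\in F$. Here is where Theorem~\ref{intersectcones} (weak generalized lifting) enters. Apply it to the pair $x<y$: with $R=\{\alpha_t:x\le ty\lhd y\}$ — wait, I need to be careful about the left/right conventions in the statement; the theorem as stated uses $tv$ and $tu$, i.e. multiplication on the left, so I apply it to the pair $(x,y)$ getting $R=\{\alpha_t\in\Phi^+: x\le ty\lhd y\}$ and $R'=\{\alpha_t\in\Phi^+: x\lhd tx\le y\}$, with $\Cone(R)\cap\Cone(R')\ne\{0\}$. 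Now I bring in the linear functional. By Lemma~\ref{anticamera}, for a covering $a\lhd b$ we have $b(p)=a(p)+c\,\alpha_{a\lhd b}$ with $c>0$ (using that $p$ satisfies $(p,\alpha_s)<0$ for $s\notin J$, $=0$ for $s\in J$, which is exactly the hypothesis on the base point here — I should check the sign/cone version of this lemma still applies to the parabolic base point; since the excerpt proves it for finite reflection groups with $(p,\alpha_s)<0$ for all $s$, and the general polytope setup allows $(p,\alpha_s)=0$ on $J$, I may need the mild extension that $b(p)-a(p)$ is a positive multiple of $\alpha_{a\lhd b}$ whenever $a\lhd b$ and both cosets are distinct — this is standard and can be cited or argued directly). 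Consequently, for every $\alpha_t\in R$ we have $y(p)-ty(p)=c_t\alpha_t$ with $c_t>0$, hence $\phi(\alpha_t)=\frac{1}{c_t}(\phi(y(p))-\phi(ty(p)))=\frac{1}{c_t}(M-\phi(ty(p)))\ge 0$ since $M$ is the maximum over the interval. Similarly for every $\alpha_t\in R'$, $tx(p)-x(p)=c_t\alpha_t$, $c_t>0$, so $\phi(\alpha_t)=\frac1{c_t}(\phi(tx(p))-\phi(x(p)))=\frac1{c_t}(\phi(tx(p))-M)\le 0$. Therefore $\phi\ge 0$ on $\Cone(R)$ and $\phi\le 0$ on $\Cone(R')$. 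Since $\Cone(R)\cap\Cone(R')$ contains a nonzero vector $w$, we get $\phi(w)=0$ with $w\in\Cone(R)$, $w\ne 0$, forcing $\phi(\alpha_t)=0$ for at least one $t$ with $\alpha_t\in R$ appearing with positive coefficient in $w$ (a nonnegative combination of nonnegative numbers vanishes only if each term with positive coefficient vanishes). For that $t$: $\alpha_t\in R$ means $x\le ty\lhd y$, and $\phi(\alpha_t)=0$ means $\phi(ty(p))=M$, i.e. $ty(p)\in F$. This is exactly the coatom $z=ty$ of $[x,y]$ we needed.

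The routine verifications I would still need to spell out: that $F$ really is the set of maximizers of a linear functional $\phi$ over $\delta_p([u,v])$ (standard face-of-polytope fact); the precise form of Lemma~\ref{anticamera} for the possibly-degenerate base point $p$ (so that covering edges translate to positive root multiples even when $J\ne\emptyset$); and that $\ty\in[u,v]$, which holds because $u\le x\le ty$ and $ty\lhd y\le v$. \emph{The main obstacle} is ensuring the interface between Theorem~\ref{intersectcones} and the polytope is airtight — in particular getting the left/right multiplication conventions consistent between the weak lifting property, the edge-labelling $\alpha_{x\lhd y}$, and Lemma~\ref{anticamera}, and confirming the Lemma~\ref{anticamera}-type statement in the parabolic setting. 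Once the sign bookkeeping ($\phi\ge 0$ on $\Cone R$, $\phi\le 0$ on $\Cone R'$, nonzero common ray $\Rightarrow$ a vanishing generator in $R$) is set up, the induction closes immediately.
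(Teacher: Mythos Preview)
Your proof is correct and follows essentially the same approach as the paper: induction on $\ell(y)-\ell(x)$, combining the weak generalized lifting property with the linear functional defining $F$ to force some atom/coatom of $[x,y]$ onto $F$ (you pick a coatom $ty$, the paper picks an atom $r_1x$, which is the same argument up to the obvious symmetry). Your worry about the parabolic base point is unnecessary here: Proposition~\ref{catena} lives in the non-parabolic section, where $(p,\alpha_s)<0$ for all $s\in S$ and Lemma~\ref{anticamera} applies verbatim.
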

\begin{proof}
 We proceed by induction on $\ell(y)-\ell(x)$. The cases $\ell(y)-\ell(x)=0,1$ are trivial, and so we suppose $\ell(y)-\ell(x)>1$.
 
 By the weak generalized lifting property, 
 there exist $r_1,\ldots,r_k, t_1,\ldots,t_h \in T$  with  $k,h>0$,  $x\lhd r_ix\leq y$ for all $i\in [k]$,  $x\leq t_i y\lhd y$ for all $i\in [h]$,  such that $\sum_i a_i \alpha_{r_i}=\sum_j b_j \alpha_{t_j}\neq 0$ with all $a_i,b_j> 0$.
 
Let $f\in V^*$ be a linear form such that $f=c$ is the equation of  the hyperplane containing $F$, and $f< c$ gives the halfspace containing $\Delta_{[u,v]}(p)\setminus F$, for a certain $c\in \mathbb R$. By Lemma~\ref{anticamera}, we have that there exist positive numbers $c_1,\ldots, c_k, d_1,\ldots,d_h$ such that $r_i(x(p))=x(p)+c_i\alpha_{r_i}$ for all $i=1,\ldots,k$ and $t_j(y(p))=y(p)-d_j\alpha_{t_j}$ for all $j=1,\ldots,h$. Since all points $r_i(x(p))$  and $t_j(y(p))$ belong to the polytope,    $f(\alpha_{r_i})\leq 0$ for all $i=1,\ldots,k$, and  $f(\alpha_{t_j})\geq 0$ for all $j=1,\ldots,h$. Thus $f(\sum a_i \alpha_{r_i})=f(\sum b_j \alpha_{t_j})=0$, and  $f(\alpha_{r_i})=f(\alpha_{t_j})=0$ for all $i=1,\ldots,k$ and $j=1,\ldots,h$; therefore, all points $r_ix(p)$ and $t_jy(p)$ lie in $F$. 
By the induction hypothesis, there is a complete chain $C'$ from $r_1x$ to $y$ such that $z(p)\in F$ for all $z\in C'$, and so  the complete chain $C=C'\cup \{x\}$ has the required property.
\end{proof}

In the study of the structure of the faces of the polytope $\Delta_{[u,v]}(p)$, we need some elementary properties of Bruhat intervals of length 2. 
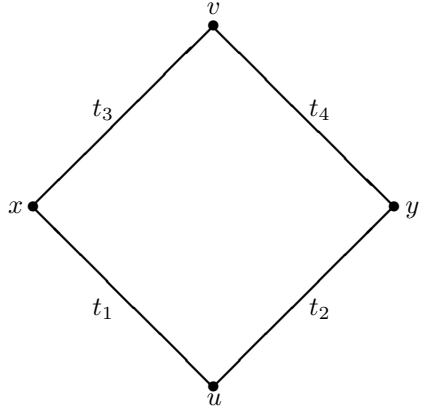
\begin{figure}
 \setlength{\unitlength}{8mm}
\begin{center}
\begin{picture}(0,6) 
\thicklines
\put(-0.1,-0.1){$\bullet$}
\put(-0.1,5.9){$\bullet$}
\put(-3.1,2.9){$\bullet$}
\put(2.9,2.9){$\bullet$}
\put(0,0){\line(1,1){3}}
\put(0,0){\line(-1,1){3}}
\put(-3,3){\line(1,1){3}}
\put(0,6){\line(1,-1){3}}
\put(-0.1,-0.3){$u$}
\put(-0.1,6.2){$v$}
\put(3.2,2.9){$y$}
\put(-3.4,2.9){$x$}
\put(-2,1.2){$t_1$}
\put(1.6,1.2){$t_2$}
\put(-2,4.5){$t_3$}
\put(1.6,4.5){$t_4$}
\end{picture}
\end{center}
\caption{Bruhat interval of length 2 \label{fig1}}
\end{figure}
\begin{lem}\label{lemma1}
 Let $u,v,x,y$ and $t_i$, $i=1,2,3,4$ be as in Figure~\ref{fig1}, i.e. $u\lhd x\lhd v$, $u\lhd y\lhd v$, $x\neq y$, $x=t_1u$, $v=t_3x$, $v=t_4y$, $y=t_2u$.  Let $f\in V^*$ and $p\in V$ be such that $(p,\alpha_s)<0$ for all $s\in S$.   
 \begin{enumerate}
  \item 
  If $f$ is equal to $c$ on any three of the four points $u(p),x(p),y(p),v(p)$, then it is equal to $c$ on the remaining one also.
  \item If $f(u(p))=f(x(p))=c$ and $f(v(p))< c$, then $f(y(p))< c$.
  \item If $f(u(p))=f(x(p))=c$ and $f(y(p))< c$, then $f(v(p))< c$.
 \end{enumerate}
\end{lem}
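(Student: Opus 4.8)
The plan is to reduce everything to the dihedral case using Proposition~\ref{quadratisi}(1) and then read off the three claims from a direct analysis of the geometry of roots in a rank-2 system together with Lemma~\ref{anticamera}. First I would record the consequences of Lemma~\ref{anticamera} applied to the four covering relations in Figure~\ref{fig1}: there are positive constants with $x(p)=u(p)+a\alpha_{t_1}$, $y(p)=u(p)+b\alpha_{t_2}$, $v(p)=x(p)+a'\alpha_{t_3}=u(p)+a\alpha_{t_1}+a'\alpha_{t_3}$, and also $v(p)=y(p)+b'\alpha_{t_4}=u(p)+b\alpha_{t_2}+b'\alpha_{t_4}$. Subtracting the base point $u(p)$ I am left with four points $0,\ a\alpha_{t_1},\ b\alpha_{t_2},\ v(p)-u(p)$ lying in the $2$-dimensional space $\Span\{\alpha_{t_1},\alpha_{t_2}\}$ (here I use Proposition~\ref{quadratisi}(1) to know this span is two-dimensional and contains all four roots $\alpha_{t_i}$). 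So the whole configuration is planar, and $f$ restricted to it is an affine functional on a plane.

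For part (1): three points among the four determine $f$ on the affine plane they span, \emph{provided they are affinely independent}; since $u(p),x(p),y(p)$ form a nondegenerate triangle (because $\alpha_{t_1}$ and $\alpha_{t_2}$ are linearly independent and $a,b>0$), any affine functional agreeing with the constant $c$ on all three must be the constant functional $c$ on the whole plane, hence equal to $c$ on $v(p)$ too; and the same argument works for any triple containing $u(p)$. For a triple not containing $u(p)$, namely $x(p),y(p),v(p)$, I would note that $v(p)-u(p)$ is a positive combination $a\alpha_{t_1}+a'\alpha_{t_3}$; since by Proposition~\ref{quadratisi}(1) $\alpha_{t_3}\in\Span\{\alpha_{t_1},\alpha_{t_2}\}$ and in the dihedral picture $\alpha_{t_3}$ lies strictly between $\alpha_{t_1}$ and $\alpha_{t_2}$ (this is the content of the reflection-ordering/Dyer description), one checks $u(p)$ lies in the affine span of $x(p),y(p),v(p)$, reducing to the previous case. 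Actually the cleanest way: by Proposition~\ref{quadratisi}(2) one of $\alpha_{t_1},\alpha_{t_2}$, say $\alpha_{t_1}$, lies in $\Cone\{\alpha_{t_3},\alpha_{t_4}\}$, so $x(p)-u(p)$ is a nonnegative combination of $v(p)-x(p)$ and $v(p)-y(p)$; this expresses $u(p)$ affinely in terms of $x(p),y(p),v(p)$, giving part (1) in all cases.

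For parts (2) and (3) I would work with the functional $g:=f-c$, so the hypotheses read $g(u(p))=g(x(p))=0$ and $g$ is negative (resp. the conclusion is $g$ negative) at certain vertices. From $g(u(p))=g(x(p))=0$ and the affine-linear structure, $g$ is constant equal to $0$ along the whole line through $u(p)$ and $x(p)$, i.e. $g$ vanishes on $u(p)+\mathbb R\alpha_{t_1}$; so $g$ is determined by its value at one point off that line, and both $y(p)$ and $v(p)$ are off it. Now by Proposition~\ref{quadratisi}, up to reducing to the dihedral group, the roots $\alpha_{t_1},\alpha_{t_2},\alpha_{t_3},\alpha_{t_4}$ sit in a rank-2 root system in the cyclic order $\alpha_{t_1},\{\alpha_{t_3}\text{ or }\alpha_{t_4}\},\alpha_{t_2}$ (with $t_1,t_2$ the two ``outer'' reflections of the interval and $t_3,t_4$ the inner ones), so $v(p)-u(p)=a\alpha_{t_1}+a'\alpha_{t_3}=b\alpha_{t_2}+b'\alpha_{t_4}$ with all coefficients positive, and consequently the point $v(p)$ lies strictly on the same side of the line $u(p)+\mathbb R\alpha_{t_1}$ as $y(p)$ does. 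Hence $g(v(p))$ and $g(y(p))$ are positive multiples of one another, and in particular one is negative iff the other is; this gives both (2) and (3) at once. The main obstacle I anticipate is making the phrase ``same side of the line'' rigorous: I need to verify, purely from $v(p)-u(p)\in\Cone\{\alpha_{t_1},\alpha_{t_3}\}$ with $\alpha_{t_3}$ strictly between $\alpha_{t_1}$ and $\alpha_{t_2}$, that the $\alpha_{t_2}$-component of $v(p)-u(p)$ (in the basis $\alpha_{t_1},\alpha_{t_2}$) is strictly positive, matching that of $y(p)-u(p)=b\alpha_{t_2}$; this is where I will invoke the explicit dihedral classification of cones spanned by pairs of positive roots from the proof of Proposition~\ref{valeperdiedrali}, rather than trying to argue abstractly.
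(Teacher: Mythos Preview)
Your approach is correct in outline but takes a more hands-on geometric route than the paper, and one of your intermediate claims is stated imprecisely.

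For (1), the paper argues in one line: any three of the four points determine two difference vectors which, by Proposition~\ref{quadratisi}(1), span the common $2$-plane $\Span\{\alpha_{t_1},\alpha_{t_3}\}=\Span\{\alpha_{t_2},\alpha_{t_4}\}$; hence $f$ vanishes on that plane and the fourth point follows. Your affine-independence argument and the detour through Proposition~\ref{quadratisi}(2) are correct but unnecessary: for the triple $x(p),y(p),v(p)$ the differences $v(p)-x(p)=a'\alpha_{t_3}$ and $v(p)-y(p)=b'\alpha_{t_4}$ are already independent (since $t_3\neq t_4$), so the same one-line span argument applies.

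For (2) and (3), the paper does \emph{not} try to locate $v(p)$ and $y(p)$ on the same side of a line. Instead it argues by contradiction: assuming $f(y(p))>c$ (respectively $f(v(p))>c$) forces definite signs $f(\alpha_1)=0$, $f(\alpha_2)>0$, $f(\alpha_3)<0$, $f(\alpha_4)<0$ (respectively $f(\alpha_3)>0$, $f(\alpha_4)>0$), and applying $f$ to the identity $a_1\alpha_1+a_2\alpha_2=a_3\alpha_3+a_4\alpha_4\neq 0$ guaranteed by the weak generalized lifting property yields an immediate contradiction. This is shorter and avoids any dihedral case analysis.

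Your geometric route also works, but the assertion that ``$t_1,t_2$ are the outer reflections and $t_3,t_4$ the inner ones'' is false in general (for example in $[s,sts]\subset I_2(5)$ one finds $\alpha_{t_4}$ lying outside $\Cone\{\alpha_{t_1},\alpha_{t_2}\}$). What is true, and is exactly what you need, is Proposition~\ref{quadratisi}(3): \emph{at least one} of $\alpha_{t_3},\alpha_{t_4}$ lies in $\Cone\{\alpha_{t_1},\alpha_{t_2}\}$. If it is $\alpha_{t_3}$, then its $\alpha_{t_2}$-coefficient is nonnegative and nonzero, so $v(p)-u(p)=a\alpha_{t_1}+a'\alpha_{t_3}$ has strictly positive $\alpha_{t_2}$-component; if it is $\alpha_{t_4}$, then $v(p)-u(p)=b\alpha_{t_2}+b'\alpha_{t_4}$ directly has $\alpha_{t_2}$-component $\geq b>0$. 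Either way your ``same side'' claim follows, and (2) and (3) drop out as you said. So your plan succeeds once you replace the informal cyclic-order picture by this appeal to Proposition~\ref{quadratisi}(3).
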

\begin{proof}
For $i=1,2,3,4$, we let $\alpha_i=\alpha_{t_i}$. By Lemma \ref{anticamera}, there exist $k_1,k_2,k_3,k_4>0$ such that 
  
  \begin{enumerate}
  \item [(i)]$x(p)=u(p)+k_1\alpha_1$,
  \item [(ii)]$y(p)=u(p)+k_2\alpha_2$, 
  \item [(iii)]$v(p)=x(p)+k_3\alpha_3$,
  \item [(iv)]$v(p)=y(p)+k_4\alpha_4$.
  \end{enumerate}

Since a functional is constant on two points if and only if it is zero on their difference, (1) is straightforward by (1) of Proposition \ref{quadratisi}. 

If $f(u(p))=f(x(p))=c$ and $f(v(p))< c$, then $f(\alpha_1)=0$ by (i), $f(\alpha_3)< 0$ by (iii), and $f(y(p))\neq c$ by (1). Toward a  contradiction, suppose  $f(y(p))>c$. By (ii) and (iv),  we obtain  $f(\alpha_2)>0$ and $f(\alpha_4)<0$. By the weak generalized lifting property (Theorem~\ref{intersectcones}), there exist nonnegative $a_1,a_2,a_3,a_4$ such that
 \[
  a_1\alpha_1+a_2\alpha_2=a_3\alpha_3+a_4\alpha_4\neq 0.
 \]
We  get a contradiction by applying $f$ to this equality, and so (2) holds.
 
 If $f(u(p))=f(x(p))=c$ and $f(y(p))< c$, then $f(\alpha_1)=0$ by (i), $f(\alpha_2)< 0$ by (ii), and $f(v(p))\neq c$ by (1).  Toward a contradiction, suppose  $f(v(p))>c$. By (iii) and (v), we obtain $f(\alpha_3)>0$ and $f(\alpha_4)>0$. As above, we get a contradiction by the weak generalized lifting property, and hence (3) holds.
 \end{proof}


\begin{cor}
\label{tutto}
Let $W$ be a finite Coxeter group, and $u,v\in W$ with $u\leq v$. Let $F$ be a face of the Bruhat interval polytope $\Delta_{[u,v]}(p)$. If $F$ contains $x(p)$ and $y(p)$ for some $x,y\in W$ with $u\leq x\leq y\leq v$, then $F$ contains all points $z(p)$, as $z$ varies in the interval $[x,y]$.
\end{cor}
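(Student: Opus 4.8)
The plan is to bootstrap from a single good complete chain to all of $[x,y]$, using the connectivity of complete chains in Bruhat intervals (Proposition~\ref{collegate}) together with the length-$2$ statement Lemma~\ref{lemma1}(1). Write the face as $F=\{q\in\Delta_{[u,v]}(p)\colon f(q)=c\}$ for a linear functional $f$ with $f\le c$ on the whole polytope, so that a vertex $z(p)$ (with $z\in[u,v]$) lies in $F$ exactly when $f(z(p))=c$. First I would apply Proposition~\ref{catena} to obtain one complete chain $D$ from $x$ to $y$ with $z(p)\in F$ for every $z\in D$; this serves as the base case.

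Next, fix an arbitrary $z\in[x,y]$ and pick any complete chain $C'$ from $x$ to $y$ through $z$ (concatenating a maximal chain from $x$ to $z$ with one from $z$ to $y$). Applying Proposition~\ref{collegate} to the interval $[x,y]$ yields a sequence of complete chains $D=C_1,C_2,\ldots,C_k=C'$ in which $C_i$ and $C_{i+1}$ differ in exactly one element. I would then show by induction on $i$ that $w(p)\in F$ for all $w\in C_i$. For the inductive step, note that two complete chains differing by one element must in fact differ at a single length $r$ with $\ell(x)<r<\ell(y)$: say $a\in C_i$ and $b\in C_{i+1}$ are the two differing elements, both of length $r$ with $a\neq b$, while the common elements $p'$ of length $r-1$ and $q'$ of length $r+1$ satisfy $p'\lhd a\lhd q'$ and $p'\lhd b\lhd q'$. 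Thus $(p',a,b,q')$ is precisely the configuration of Figure~\ref{fig1}, with a length-$2$ Bruhat interval $[p',q']\subseteq[u,v]$. By the inductive hypothesis $f$ equals $c$ on the three points $p'(p),a(p),q'(p)$, so Lemma~\ref{lemma1}(1) forces $f(b(p))=c$, i.e. $b(p)\in F$; since every other element of $C_{i+1}$ already lies in $C_i$, we conclude $w(p)\in F$ for all $w\in C_{i+1}$. Taking $i=k$ gives $z(p)\in F$, and since $z\in[x,y]$ was arbitrary, the corollary follows.

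The only non-formal ingredient is Lemma~\ref{lemma1}(1) (which itself rests on Proposition~\ref{quadratisi}(1)); everything else is bookkeeping about graded posets and the defining inequality of a face. The point requiring care is the identification in the inductive step: one must verify that two complete chains of $[x,y]$ that differ by a single element really differ at one length and agree at the lengths immediately above and below it, so that the four elements involved genuinely form a length-$2$ subinterval to which Lemma~\ref{lemma1}(1) applies verbatim. The degenerate cases $\ell(y)-\ell(x)\le 1$ are immediate from the hypothesis and need no separate discussion.
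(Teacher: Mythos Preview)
Your proof is correct and follows essentially the same route as the paper: obtain one complete chain in $F$ via Proposition~\ref{catena}, connect it to an arbitrary chain through $z$ using Proposition~\ref{collegate}, and propagate membership in $F$ across single-element flips by invoking Lemma~\ref{lemma1}(1) on the resulting length-$2$ subinterval. Your write-up is in fact more explicit than the paper's about why the single-element flip yields a length-$2$ diamond, which is the only point needing care.
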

\begin{proof}
By  Proposition~\ref{catena}, there exists a complete chain $C$ from $x$ to $y$ such that $z(p) \in F$, for all $z\in C$. Let $z\in [x,y]$, and let $C'$ be a complete chain $C$ from $x$ to $y$ containing $z$. By Proposition~\ref{collegate},  there exist $k\in \mathbb N$ and a sequence $C=C_1, C_2, \ldots, C_k=C'$ of complete chains from $x$ to $y$ such that $C_i$ and $C_{i+1}$ differ by one element, for all $i=1,2, \ldots, k-1$. Therefore, we may conclude by (1) of Lemma~\ref{lemma1}.
\end{proof}
We are now ready to prove the main result of this section.
\begin{thm}
\label{sonoCM}
Let $(W,S)$ be a finite Coxeter system. For all $u,v\in W$ with $u\leq v$, the interval $[u,v]$  is a Coxeter matroid.
\end{thm}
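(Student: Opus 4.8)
The plan is to view $[u,v]$ as a subset $\mathcal M$ of $W/W_\emptyset=W$ and to verify clause~(2) of Theorem~\ref{defCM}: that every edge of the Bruhat interval polytope $\Delta_{[u,v]}(p)$ is parallel to a root. Since $p$ lies in the interior of the fundamental chamber its stabilizer is trivial, so distinct elements of $[u,v]$ give distinct vertices, and by Remark~\ref{vero} an edge $F$ of $\Delta_{[u,v]}(p)$ has exactly two vertices, $x(p)$ and $y(p)$ for some $x\neq y$ in $[u,v]$. I would argue by induction on $\ell(v)-\ell(u)$, the cases $\ell(v)-\ell(u)\le 1$ being immediate since then $\Delta_{[u,v]}(p)$ is a point or the segment $\overline{u(p)\,tu(p)}$, which is parallel to $\alpha_t$ by Lemma~\ref{anticamera}.

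If $x$ and $y$ are comparable, say $x<y$, then Corollary~\ref{tutto} forces $F$ to contain $z(p)$ for all $z\in[x,y]$; since $F$ has only two (distinct) vertices this yields $[x,y]=\{x,y\}$, i.e.\ $x\lhd y$, and then Lemma~\ref{anticamera} gives that $F$ is parallel to $\alpha_{x\lhd y}\in\Phi$. Hence the whole point is to show that the endpoints of an edge cannot be incomparable, and this is the step I expect to be the main obstacle.

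So assume $x,y$ incomparable. Then $x,y\notin\{u,v\}$ (e.g.\ $x=u$ would give $x\le y$). One first tries to pass to a shorter interval: if some coatom $z\lhd v$ of $[u,v]$ satisfies $x\le z$ and $y\le z$, then $\Delta_{[u,z]}(p)\subseteq\Delta_{[u,v]}(p)$, the hyperplane supporting $F$ also supports $\Delta_{[u,z]}(p)$ and meets it exactly in $F$, so $F$ is an edge of $\Delta_{[u,z]}(p)$; since $\ell(z)-\ell(u)<\ell(v)-\ell(u)$ this contradicts the induction hypothesis, and symmetrically if some atom of $[u,v]$ lies below both $x$ and $y$. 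It therefore remains to treat the case where no coatom of $[u,v]$ lies above both $x$ and $y$ and no atom lies below both; a short argument shows that then $v$ is the only common upper bound, and $u$ the only common lower bound, of $\{x,y\}$ within $[u,v]$.

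In this remaining case the goal is a contradiction from the assumption that $F=\overline{x(p)y(p)}$ is an edge. When $\ell(v)-\ell(u)=2$ this is immediate: $[u,v]$ is then the diamond of Figure~\ref{fig1} with $x,y$ as its middle elements, by Proposition~\ref{quadratisi}(1) and Lemma~\ref{anticamera} the four points $u(p),x(p),y(p),v(p)$ lie in a common plane, they are vertices of $\Delta_{[u,v]}(p)$ and hence in convex position, and parts~(1)--(3) of Lemma~\ref{lemma1} show that $\overline{x(p)y(p)}$ and $\overline{u(p)v(p)}$ are the two diagonals of the quadrilateral $\Delta_{[u,v]}(p)$; they cross at a point interior to each, so a supporting hyperplane for the edge $\overline{x(p)y(p)}$ would have to keep $u(p)$ and $v(p)$ strictly on one side and so could not meet $\overline{u(p)v(p)}$ --- a contradiction. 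For $\ell(v)-\ell(u)\ge3$ I would reduce to this configuration by a chain argument: using Proposition~\ref{collegate} to connect a maximal chain of $[u,v]$ through $x$ to one through $y$ by elementary moves, and parts~(2)--(3) of Lemma~\ref{lemma1} to control, along the way, where the functional supporting $F$ attains its maximum, one locates a length-$2$ subinterval of $[u,v]$ that witnesses $\overline{x(p)y(p)}$ (or an appropriate intermediate pair) as a diagonal of a quadrilateral, and then concludes as before. Making this reduction precise --- it is forced by the uniqueness of $u$ and $v$ as common bounds --- is the delicate point; the comparable case and the ``crossing diagonals'' argument are routine given Corollary~\ref{tutto}, Lemma~\ref{lemma1}, and Proposition~\ref{quadratisi}.
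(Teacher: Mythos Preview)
Your overall plan---verify clause~(2) of Theorem~\ref{defCM} by showing that the two endpoints of any edge are in a covering relation---is the same as the paper's, and the comparable case via Corollary~\ref{tutto} is correct. The inductive reduction through a common coatom or atom is also sound (though the phrase ``contradicts the induction hypothesis'' is misleading: what you actually get is that $F$ is an edge of a shorter interval polytope, hence parallel to a root by induction, and you are done with that subcase).

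The genuine gap is exactly where you locate it. In the residual configuration---$x,y$ incomparable, $\ell(v)-\ell(u)\ge 3$, $v$ the only common upper bound and $u$ the only common lower bound of $\{x,y\}$ in $[u,v]$---the proposed ``chain argument'' does not do what you need. Elementary moves between maximal chains from $u$ to $v$ (Proposition~\ref{collegate}) take place in length-$2$ diamonds at arbitrary heights and have no a~priori relation to the segment $\overline{x(p)\,y(p)}$; and Lemma~\ref{lemma1} controls the supporting functional along a \emph{covering} edge, not along a segment joining two incomparable elements. There is no mechanism here that singles out a diamond whose diagonal is $\overline{x(p)\,y(p)}$, and I do not see how to complete this route without a new idea. (Nor is it clear that this residual case is vacuous for $\ell(v)-\ell(u)\ge 3$.)

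The paper bypasses induction entirely with a reflection-ordering argument. Let $f$ be a supporting functional for the edge, so $f(x(p))=f(y(p))=a$ and $f<a$ on the rest of the polytope. Since $x,y$ are not in covering relation, every cover $x\lhd tx\le v$ or $y\lhd ty\le v$ has $f(\alpha_t)<0$, and every cover $u\le rx\lhd x$ or $u\le ry\lhd y$ has $f(\alpha_r)>0$ (Lemma~\ref{anticamera}). Proposition~\ref{reflecorder} (with $f_1=-f$) then produces a reflection ordering in which every such $r$ precedes every such $t$. This forces $x$ and $y$ to be incomparable (an increasing chain between them would have its first label in the ``up'' set and its last in the ``down'' set), and, crucially, the concatenation of the unique increasing chain $u\to x$ with the unique increasing chain $x\to v$ is itself increasing; likewise through $y$. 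Two distinct increasing chains from $u$ to $v$ contradict Proposition~\ref{unasola}. This single stroke replaces your entire residual case.
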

\begin{proof}
Recall the Gelfand--Serganova Theorem (Theorem~\ref{defCM}). We show that every edge in the Bruhat interval polytope $\Delta_{[u,v]}(p)$ is parallel to a root. Let $x(p)$ and $y(p)$ be the two vertices of an edge $E$ of $\Delta_{[u,v]}(p)$. We recall that $E$ does not contain $z(p)$, for $z\notin\{x,y\}$, by Remark~\ref{vero}. We show that $x$ and $y$ are in covering relation. Suppose to the contrary that $x$ and $y$ are not  in covering relation. 

Let $H$ be a hyperplane with equation $f=a$ such that the edge $E$ belongs to $H$ and $\Delta_{[u,v]}(p)\setminus E$ is contained in the halfspace $f< a$. Set $U=\{t\in T : x\lhd tx\leq v \text{ or } y\lhd ty\leq v\}$ and $D=\{r\in T : u\leq rx\lhd x \text{ or } u\leq ry\lhd y\}$. Observe that the condition that $x$ and $y$ are not in covering relation implies that $tx\neq y$ for all $t\in D\cup U$. In particular, if $t\in U$ and $x\lhd tx$, then $f(tx(p))<a$. By Lemma~\ref{anticamera}, $tx(p)=x(p)+c\alpha_t$ with $c>0$, and hence $f(\alpha_t)<0$. The same argument shows 
$f(\alpha_t)<0$ for all $t\in U$, and similarly  $f(\alpha_r)>0$ for all $r\in D$.
By Proposition~\ref{reflecorder}, there exists a reflection ordering $\preceq$ such that $r\preceq t$ for all $r\in D$ and all $t\in U$.  Hence $x$ and $y$ are not comparable since otherwise there should be an increasing complete chain connecting $x$ and $y$ (by Proposition~\ref{unasola}) and this contradicts $r\preceq t$ for all $r\in D$ and all $t\in U$. 
Let $C_1$ be the unique  increasing complete chain from $u$ to $x$ and $C_2$ be the unique  increasing complete chain from $x$ to $v$. The condition $r\preceq t$ for all $r\in D$ and $t\in U$ implies that the concatenation of $C_1$ and $C_2$ is an increasing chain from $u$ to $v$ passing through $x$. Similarly, there is an increasing chain from $u$ to $v$ passing through $y$, and this contradicts the unicity guaranteed by Proposition~\ref{unasola} (since $x$ and $y$ are not comparable). 

Therefore, $x$ and $y$ are in covering relation, $y=tx$ for some reflection $t$, and the edge $E$ is parallel to the root $\alpha_t$. 
\end{proof}

\section{Structure of Bruhat interval polytopes}
\label{struttura}
In this section, we provide some results  on the structure of Bruhat interval polytopes and, in particular, we show that faces of a Bruhat interval polytope are themselves Bruhat interval polytopes. As usual, we fix a point $p\in V$ such that  $(p,\alpha_{s})<0$ for all $s\in S$.  
\begin{thm}
\label{faccesonointervalli}
Let $(W,S)$ be a finite Coxeter system and $u,v\in W$ with $u\leq v$.  Then every face of the Bruhat interval polytope $\Delta_{[u,v]}(p)$ is a Bruhat interval polytope $\Delta_{[x,y]}(p)$, for some $u\leq x\leq y\leq v$.
\end{thm}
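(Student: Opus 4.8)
The plan is to let $F$ be a face of $\Delta_{[u,v]}(p)$ and produce a subinterval $[x,y]$ of $[u,v]$ whose polytope is exactly $F$. The natural candidates are forced on us: let $x$ be a Bruhat-minimal element of $\{z\in[u,v]:z(p)\in F\}$ and $y$ a Bruhat-maximal element of the same set. I would first check that $x$ and $y$ are \emph{unique}, i.e. that $x$ is the minimum and $y$ the maximum in Bruhat order of the set of group elements landing in $F$. The key tool here is Corollary~\ref{tutto}: if $x_1(p),x_2(p)\in F$ with $x_1,x_2$ both Bruhat-minimal among vertices of $F$, then taking any $w\in[u,v]$ with $w(p)\in F$ we would like to compare; more directly, once we know all of $[x,y]$ lands in $F$ for each pair $x\le y$ of vertices of $F$ (Corollary~\ref{tutto}), connectedness of the vertex set of $F$ under the relation "$a\le b$ or $b\le a$" follows, and a standard argument (any two minimal elements are connected by a zigzag, and a minimal element below something forces equality) yields a unique minimum $x$ and unique maximum $y$. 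So the vertex set of $F$ is contained in $\delta_p([x,y])$.

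The reverse inclusion $\delta_p([x,y])\subseteq F$ is exactly the content of Corollary~\ref{tutto} applied to the pair $x\le y$, both of whose images are vertices of $F$ by construction. Hence the vertex set of $F$ equals $\delta_p([x,y])$ precisely. Since by Remark~\ref{vero} every point of $\delta_p([x,y])$ is a vertex of $\Delta_{[x,y]}(p)$, and a face of a polytope is the convex hull of the vertices of the ambient polytope that it contains, we get
\[
F=\mathrm{conv}\big(\delta_p([x,y])\big)=\Delta_{[x,y]}(p),
\]
where the first equality uses that $F$ is the convex hull of its own vertices and the second is Definition~\ref{politopo}. This closes the argument.

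The main obstacle I anticipate is the uniqueness of the minimum and maximum. It is not a priori clear that the set $\{z\in[u,v]:z(p)\in F\}$ has a Bruhat-minimum rather than merely minimal elements; this is where the geometry of $F$ being a face (a single supporting hyperplane $f=c$) must be combined with the combinatorics. Concretely, if $x_1,x_2$ are two distinct Bruhat-minimal vertices of $F$, pick a vertex $z$ of $F$ above both in the connected vertex-poset of $F$ (connectivity coming from Corollary~\ref{tutto}), walk down a saturated chain from $z$ toward $x_1$ and toward $x_2$ inside $[u,v]$, and use that the supporting functional $f$ is constant equal to $c$ on all of $F$ together with part~(1) of Lemma~\ref{lemma1} (propagation of the value $c$ across a length-two interval) to force the whole descending interval $[u',z]$ into $F$; bottoming out contradicts minimality unless $x_1=x_2$. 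The same reasoning, read upward, handles the maximum. Making this zigzag/propagation argument precise — essentially showing the set of vertices of $F$ is an order-convex, order-connected subset of $[u,v]$ with a single source and single sink — is the real work; everything else is bookkeeping with Definition~\ref{politopo} and Remark~\ref{vero}.
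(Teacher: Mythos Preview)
Your overall architecture is the same as the paper's: find $x$ minimal and $y$ maximal among $\{z\in[u,v]:z(p)\in F\}$, use Corollary~\ref{tutto} to get $\delta_p([x,y])\subseteq F$, and take convex hulls. The reverse inclusion and the final identification are fine.

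The gap is precisely where you flagged it: uniqueness of the minimum and maximum. Your sketch does not establish it. Corollary~\ref{tutto} gives \emph{order-convexity} of the vertex set of $F$ (if $a\le b$ are both in $F$ then $[a,b]\subseteq F$), but it does not give connectedness under the comparability relation, and even order-convexity together with comparability-connectedness does not force a unique minimum: the subset $\{a,b,c\}$ of a poset with $a<c$, $b<c$, and $a,b$ incomparable is order-convex and comparability-connected yet has two minimal elements. Your propagation idea via Lemma~\ref{lemma1}(1) does not rescue this, because that lemma only moves the value $c$ across a single length-two square; it gives no mechanism for producing an element of $F$ strictly below a given minimal $x_1$. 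And the step ``pick a vertex $z$ of $F$ above both $x_1$ and $x_2$'' already presupposes a common upper bound in $F$, which is exactly what is at stake.

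The paper closes this gap by a different route that you never invoke: it first proves (Theorem~\ref{sonoCM}) that $[u,v]$ is a Coxeter matroid, so every edge of $\Delta_{[u,v]}(p)$ is parallel to a root. Since edges of $F$ are edges of $\Delta_{[u,v]}(p)$, the Gelfand--Serganova criterion (Theorem~\ref{defCM}) shows that the vertex set $A$ of $F$ is itself a Coxeter matroid. The Maximality Property applied with $w=e$ then yields a unique Bruhat maximum $y\in A$, and the Minimality Property a unique minimum $x\in A$, for free. After that, Corollary~\ref{tutto} finishes as you described. So the missing ingredient in your argument is Theorem~\ref{sonoCM}; once you cite it, uniqueness is immediate and the zigzag discussion can be deleted.
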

\begin{proof}Let $F$ be a face of $\Delta_{[u,v]}(p)$ and $A\subset [u,v]$ be such that $F$ is the convex hull of $\{z(p):\, z\in A\}$. Since edges of $F$ are edges of $\Delta_{[u,v]}(p)$, the  Gelfand--Serganova Theorem (Theorem~\ref{defCM}) and Theorem~\ref{sonoCM} imply that also $A$ is  a Coxeter matroid; in particular, $A$ has a maximum $y$ and a minimum $x$ with respect to the Bruhat order $\leq$, which coincides with $\leq^e$. The result follows by Corollary~\ref{tutto}.
\end{proof}
In the case $W$ is the symmetric group, Theorem~\ref{faccesonointervalli} specialises to the main result of \cite[\S4.1]{TW}.

When  $u<v$, we let $\Phi(u,v)$ be the set of positive roots corresponding to the covering relations between elements in $[u,v]$, and  $U_{u,v}=\Span(\Phi(u,v))$.
\begin{pro}
\label{uguale}
Let $(W,S)$ be a finite Coxeter system. Let $u,v\in W$ with $u\leq v$, and $C$ be any maximal chain from $u$ to $v$. Let  $\Phi_{\at}(u,v)$, $\Phi_{\coat}(u,v)$, $\Phi_{C}$ be the sets of positive roots corresponding to  the covering relations between $u$ and the atoms of $[u,v]$, between $v$ and the coatoms of $[u,v]$, and between elements in $C$, respectively. Then 

\[
 U_{u,v}=\Span(\Phi_{\at}(u,v))=\Span(\Phi_{\coat}(u,v))=\Span(\Phi_C).
\]
\end{pro}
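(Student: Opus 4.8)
The plan is to prove the chain of equalities
\[
U_{u,v}=\Span(\Phi_{\at}(u,v))=\Span(\Phi_{\coat}(u,v))=\Span(\Phi_C)
\]
by establishing the obvious inclusions $\Span(\Phi_{\at}(u,v)),\Span(\Phi_{\coat}(u,v)),\Span(\Phi_C)\subseteq U_{u,v}$ (which are immediate from the definition of $\Phi(u,v)$) and then showing the reverse inclusion in each case. The heart of the argument is the reverse inclusion $U_{u,v}\subseteq\Span(\Phi_{\at}(u,v))$: every root $\alpha_{x\lhd y}$ coming from a covering relation inside $[u,v]$ should be expressible as a linear combination of roots attached to atoms.

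First I would prove $U_{u,v}=\Span(\Phi_C)$. Given any covering relation $x\lhd y$ in $[u,v]$, extend it to a maximal chain $C'$ from $u$ to $v$; then by Proposition \ref{collegate} there is a sequence of maximal chains $C=C_1,C_2,\dots,C_k=C'$ with consecutive chains differing in exactly one element. Using part (1) of Proposition \ref{quadratisi} at the "diamond" where two consecutive chains differ, the root replaced in passing from $C_i$ to $C_{i+1}$ lies in the span of the two roots it is adjacent to in the shared edges, so $\Span(\Phi_{C_{i+1}})\subseteq\Span(\Phi_{C_i})$ and, by symmetry, the two spans are equal. Hence $\Span(\Phi_{C'})=\Span(\Phi_C)$ for every maximal chain $C'$, and since every covering root of $[u,v]$ appears on some maximal chain, $U_{u,v}=\Span(\Phi_C)$. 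This already shows the span of the covering roots is independent of the chain.

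Next I would prove $U_{u,v}=\Span(\Phi_{\at}(u,v))$; the coatom case is symmetric (apply the atom case to the interval $[w_0v,w_0u]$, using that multiplication by $w_0$ is an order-reversing bijection of $W$ sending covering relations to covering relations, with the roots being permuted by $w_0$ up to sign — or just dualize the argument directly). For the atom case, I would induct on $\ell(v)-\ell(u)$. The base cases $\ell(v)-\ell(u)\le 2$ are handled by Proposition \ref{quadratisi}(1): in a length-$2$ interval all four covering roots lie in a common $2$-dimensional span, equal to the span of the two atom roots. For the inductive step, take an arbitrary covering relation $x\lhd y$ with $u\le x\le y\le v$; pick an atom $a$ of $[u,v]$ with $a\le x$ (possible since $u\le x$ and $x\ne u$ can be arranged; if $x=u$ then $\alpha_{x\lhd y}\in\Phi_{\at}(u,v)$ directly), and note that $a\le x\le y\le v$. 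If $a\ne u$ is below $y$, the interval $[a,v]$ is shorter, so by induction $\alpha_{x\lhd y}\in U_{a,v}=\Span(\Phi_{\at}(a,v))$, and each atom root of $[a,v]$ is a covering root of a length-$2$ interval $[u,\cdot]$ (namely $u\lhd a\lhd(\text{that atom})$), hence lies in $\Span(\Phi_{\at}(u,v))$ again by the base case applied to that diamond — this last point needs Proposition \ref{quadratisi}(1) to relate $\alpha_{u\lhd a}$, $\alpha_{a\lhd a'}$ to the atom roots of $[u,v]$. Assembling these, $\alpha_{x\lhd y}\in\Span(\Phi_{\at}(u,v))$.

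The main obstacle I anticipate is the bookkeeping in the inductive step: one must be careful that the "atom reduction" genuinely decreases length and that the roots produced at each stage can be pushed back down to atom roots of the original interval using only the local diamond relation of Proposition \ref{quadratisi}(1). A cleaner route, which I would try first, is to avoid induction on $[a,v]$ and instead argue purely chain-theoretically: fix the maximal chain $C$ through an atom $a_0$, so by the first paragraph $U_{u,v}=\Span(\Phi_C)$; then show by the diamond moves of Proposition \ref{collegate}–\ref{quadratisi}(1), performed from the bottom upward, that $\Span(\Phi_C)$ is unchanged if one only ever replaces the bottom edge, which lets one reach a chain through any prescribed atom while keeping the second-from-bottom edge fixed — iterating, every atom root of $[u,v]$ and every $\Phi_C$-root lies in the common span, and conversely. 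Either way, Proposition \ref{quadratisi}(1) together with Proposition \ref{collegate} does all the geometric work; no further input is needed.
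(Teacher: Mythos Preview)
Your proposal is correct and follows essentially the same approach as the paper: both arguments rest entirely on Proposition~\ref{quadratisi}(1) for the diamond relation and Proposition~\ref{collegate} for connecting maximal chains. The only difference is organizational---the paper proves the atom equality by directly iterating the ``push the covering root down one level via a rank-$2$ subinterval'' step (any $x\lhd w$ with $x\neq u$ is the upper edge of some rank-$2$ interval in $[u,v]$, so $\alpha_{x\lhd w}$ lies in the span of the two lower-edge roots), whereas you package the same descent as a formal induction on $\ell(v)-\ell(u)$ through the subinterval $[a,v]$; the paper's version is slightly more direct.
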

\begin{proof}We first show that  $U_{u,v}=\Span(\Phi_{\at}(u,v))$. Let $u<x\lhd w\leq v$. Since $x\neq u$, the covering relation $x\lhd w$ is an ``upper'' edge of a Bruhat interval of rank 2 contained in $[u,v]$. By (1) of Proposition \ref{quadratisi},  the root $\alpha_{x\lhd w}$ is a linear combination of the roots corresponding to the ``lower'' edges of such interval. By iterating this procedure, we show that $\alpha_{x\lhd w}$ is a linear combination of roots in $\Phi_{\at}(u,v)$, and so $\Span(\Phi_{\at}(u,v))=U_{u,v}$. The proof of $\Span(\Phi_{\coat}(u,v))=U_{u,v}$ is analogous.  
 Finally, $\Span(\Phi_C)=U_{u,v}$ follows similarly from
  Proposition~\ref{collegate} and (1) of Proposition \ref{quadratisi}.
\end{proof}

\begin{rem}
Observe that Proposition~\ref{uguale} implies that $\dim(U_{u,v})\leq \min \{\, \# \text{atoms}, \,\# \text{coatoms}, \,\ell(v)-\ell(u)\}$. 
Let $W_{u,v}$ denote the reflection subgroup generated by  the reflections labeling covering relations between elements in $[u,v]$. The group $W_{u,v}$ is itself a Coxeter group and its rank is  $\dim(U_{u,v})$. By Lemma \ref{anticamera}, a linear form is constant on the Bruhat interval polytope $\Delta_{[u,v]}$ if and only if  it vanishes on $U_{u,v}$, which implies $\dim(\Delta_{[u,v]})=\dim(U_{u,v})=\rank(W_{u,v})$.
\end{rem}

%
%
%
%
%

\begin{figure}
 \setlength{\unitlength}{12mm}
\begin{center}
\begin{picture}(0,10) 
\thicklines
\qbezier{(0,0)(-7,5)(0,10)}
\qbezier{(0,0)(7,5)(0,10)}
\qbezier{(0,1)(-6,4)(0,7)}
\qbezier{(0,1)(4,4)(0,7)}
\put(-0.1,-0.1){$\bullet$}
\put(-0.1,9.9){$\bullet$}
\put(-0.1,0.9){$\bullet$}
\put(-0.1,6.9){$\bullet$}

\put(-0.1,8.9){$\bullet$}
\put(0.9,7.95){$\bullet$}
\put(-1.05,7.95){$\bullet$}
\put(0.4,4.9){$\bullet$}
\put(2.4,4.9){$\bullet$}
\put(1.4,5.9){$\bullet$}
\put(1.4,3.9){$\bullet$}

\put(0,7){\line(1,1){1}}
\put(-1,8){\line(1,-1){1}}
\put(-1,8){\line(1,1){1}}
\put(0,9){\line(1,-1){1}}

\put(1.5,4){\line(1,1){1}}
\put(0.5,5){\line(1,-1){1}}
\put(0.5,5){\line(1,1){1}}
\put(1.5,6){\line(1,-1){1}}

\put(-0.1,-0.3){$u$}
\put(-0.1,10.2){$v$}
\put(-0.1,7.2){$y$}
\put(-0.1,0.7){$x$}
\put(-0.1,9.2){$w'$}
\put(-1.4,7.9){$w''$}
\put(1.1,7.9){$w$}

\put(1.4,3.7){$o$}
\put(1.4,6.2){$q'$}
\put(0.1,4.9){$q''$}
\put(2.6,4.9){$\bar q$}
\end{picture}
\end{center}
\caption{Proof of Lemma \ref{4.18giusto} \label{fig2}}
\end{figure}

\begin{lem}
\label{4.18giusto}
Let $(W,S)$ be a finite Coxeter  system, and  $x,y,u,v\in W$ be such that $u\leq x\leq y\leq v$.  Suppose that $f \in V^*$ and $c\in \mathbb R$ are such that $f(z(p))=c$ for all $z\in [x,y]$. If $f(w(p))\leq c$ for all $w\in [u,v]$ covering  $y$, then  $f(q(p))\leq c $ also for all $q\in [u,v]$ covering any element  $o\in[x,y]$. 
\end{lem}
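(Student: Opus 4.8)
The plan is to argue by induction on $\ell(y)-\ell(o)$, reducing to the case where $o$ is a coatom of $[x,y]$, and then to exploit Lemma~\ref{lemma1} locally on rank-2 intervals. So first I would dispose of the base case $o=y$, which is precisely the hypothesis. For the inductive step, fix $o\in[x,y]$ with $o\neq y$ and fix some $o\lhd o'$ with $o'\in[x,y]$; by induction we may assume the conclusion holds for every element of $[x,y]$ strictly above $o$, in particular for $o'$. Now let $q\in[u,v]$ cover $o$. If $q\in[x,y]$ there is nothing to prove since $f$ is constantly $c$ on $[x,y]$, so assume $q\notin[x,y]$. The goal is to show $f(q(p))\le c$.

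**The rank-2 argument.** Consider the four elements $o\lhd o'$, $o\lhd q$, and their joins. The point is to produce an element $q'$ with $o'\lhd q'$ and a rank-2 interval whose bottom is $o$, whose two atoms are $o'$ and $q$, and whose top is $q'$ — this is the situation of Figure~\ref{fig1} with $u\mapsto o$, $x\mapsto o'$ (or $q$), $y\mapsto q$ (or $o'$), $v\mapsto q'$. We have $f(o(p))=f(o'(p))=c$ since $o,o'\in[x,y]$. By the inductive hypothesis applied to $o'$, and noting $q'$ covers $o'$, we get $f(q'(p))\le c$. Now apply part (3) of Lemma~\ref{lemma1}: if $f(o(p))=f(o'(p))=c$ and $f(q(p))>c$, then $f(q'(p))>c$, contradicting $f(q'(p))\le c$. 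Hence $f(q(p))\le c$, as desired. (Strictly, Lemma~\ref{lemma1} is stated with $f=c$ on the bottom two and an inequality on one atom forcing an inequality on the top; we use the contrapositive direction of part (3): $f(o(p))=f(o'(p))=c$ together with $f(q'(p))\le c$ forces $f(q(p))\le c$ — this is exactly part (2) with the roles of the atom "$y$" played by $q$ and "$v$" by $q'$, so in fact part (2), not part (3), is the one to cite, reading $x\mapsto o'$, $y\mapsto q$.)

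**The main obstacle.** The delicate point is that $o'$ and $q$ need not have a join of rank $2$ in $[u,v]$ — a priori the join could have rank $1$ (i.e. $o'=q$, excluded since $q\notin[x,y]$) or the element covering both might not exist inside $[u,v]$, or $o$ might be covered by many elements of $[u,v]$ not organized into a single diamond. In a general (non-lattice) Bruhat interval one must instead invoke the structure of rank-2 intervals: given $o\lhd o'$ and $o\lhd q$ with $o'\neq q$, the interval $[o, \mathrm{join}]$ inside the full Coxeter group has a unique rank-2 interval containing both covering relations, but we need its top to lie in $[u,v]$. Here is where I would use $q\in[u,v]$ and $o'\in[u,v]$ together with the fact that $[u,v]$ is itself "thin" in the relevant sense — more precisely, I expect to need the diamond property of Bruhat order: any interval of length $2$ in $W$ has exactly two elements strictly between its endpoints, and if $o\lhd o'$, $o\lhd q$ in $[u,v]$ then the common element covering both in $W$ lies in $[u,v]$ provided $[u,v]$ already contains some upper bound — which it does, namely any maximal-chain continuation. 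Carefully, one takes $q'$ to be the element covering $q$ along a maximal chain of $[u,v]$ through $q$ up to $v$; then $[o,q']$ may have length $>2$, and I would instead apply Lemma~\ref{lemma1} to the rank-2 interval spanned by $o\lhd o', o\lhd q$ and its top $\tilde q$ in $W$, after first checking $\tilde q\in[u,v]$ using that $[u,v]$ is an order ideal-cum-filter intersection and $\tilde q\le$ (the join of $o'$ and $q$) $\le v$. This verification that the relevant small interval sits inside $[u,v]$, so that all four of its elements have defined $f$-values constrained by the hypotheses, is the step requiring genuine care; everything else is the two-line application of Lemma~\ref{lemma1}(2) plus induction.
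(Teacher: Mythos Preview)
Your approach has a genuine gap, and it is precisely the gap that the paper itself flags in Remark~\ref{4.18} as the error in \cite[Lemma~4.18]{TW}. Given $o\lhd o'$ and $o\lhd q$ with $o'\neq q$, there need \emph{not} exist any element of $W$ covering both $o'$ and $q$; Bruhat order is not a lattice, and the ``diamond'' you want to build from below simply may fail to exist. The paper's concrete counterexample (in the symmetric group, so this is not a non--simply-laced phenomenon) is $o=2143$, $o'=2341$, $q=4123$: there is no $\tilde q$ with $o'\lhd\tilde q$ and $q\lhd\tilde q$. Your attempted repair in the last paragraph---arguing that the top of the rank-2 interval lies below some join inside $[u,v]$---presupposes that this top exists in $W$, which it does not in general. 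So the induction step as written does not go through.

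The paper's proof circumvents this by working from above rather than from below. It first treats the case where the inequality $f(w(p))<c$ is strict for all $w\in[u,v]$ covering $y$. In that case one takes the reflection ordering $\preceq$ built from $f_1=-f$ (Proposition~\ref{reflecorder}), chooses $\bar q\notin[x,y]$ covering $o$ with $\alpha_{o\lhd\bar q}$ minimal for $\preceq$, and then looks at the unique increasing chain from $\bar q$ to $v$. Letting $q'$ be the element covering $\bar q$ on that chain, uniqueness of increasing chains from $o$ to $v$ forces $\alpha_{\bar q\lhd q'}\prec\alpha_{o\lhd\bar q}$, so the increasing chain from $o$ to $q'$ passes through the \emph{other} atom $q''$ of $[o,q']$; minimality of $\bar q$ together with Proposition~\ref{quadratisi} then forces $q''\in[x,y]$. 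Now one has a genuine rank-2 interval $[o,q']$ with one atom in $[x,y]$, and Lemma~\ref{lemma1} applies via the inductive hypothesis at $q''$. The non-strict case is handled by a separate induction on $\ell(v)-\ell(y)$. The essential new idea you are missing is the use of the reflection ordering and Proposition~\ref{unasola} to \emph{produce} the needed rank-2 interval from the top down, rather than hoping it exists from the bottom up.
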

\begin{proof}
The reader may refer to Figure \ref{fig2}. 
First we show the result in the case $f(w(p))<c$ for all $w$ such that $y\lhd w\leq v$.
 
We use induction on $\ell(y)-\ell(o)$. If $\ell(y)-\ell(o)=0$, then $o=y$ and the assertion is true by the hypothesis.

Suppose $\ell(y)-\ell(o)>0$. 
We choose any basis $\{f_1,\ldots,f_n\}$ of $V^*$ with $f_1=-f$ and consider the reflection ordering $\preceq $ associated with this basis (see  Proposition~\ref{reflecorder}). Given a root $\alpha$ labeling a covering relation in $[x,y]$ and a root $\beta$ labeling a covering relation of the form $y \lhd w$ with $w\in[u,v]$, we have 
\begin{eqnarray}
\label{tuttepiupiccole}
\alpha \prec \beta 
\end{eqnarray}
by the two following facts:
\begin{itemize}
\item $f(\alpha)=0$, since $f$ is constant on $[x,y]$,
\item $f(\beta)<0$, since $f(y(p)) > f(w(p))$ and $w(p)= y(p) +k\beta$ for some $k>0$ by Lemma~\ref{anticamera}.  
\end{itemize}
Now we choose $\bar q \in [u,v]\setminus [x,y]$ covering $o$ in such a way  that  $\alpha_{o\lhd \bar q}$ is as small as possible in the reflection ordering $\preceq$ (of course, when there is no such $\bar q$, there is nothing to prove).  We get the assertion if we show that $f(\alpha_{o\lhd \bar q}) \leq 0$: indeed, if $f(\alpha_{o\lhd \bar q}) \leq 0$, then $f(\alpha_{o\lhd q})\leq f(\alpha_{o\lhd \bar q})\leq0$ for all $q\in [u,v]\setminus[x,y]$ such that  $o\lhd q$ (since $\alpha_{o\lhd q}\succeq \alpha_{o\lhd \bar q}$), and so $f(q(p))\leq c$.  


Let us prove  $f(\alpha_{o\lhd \bar q})\leq 0$. If $\alpha_{o\lhd \bar q} \in U_{x,y}$, then $f(\alpha_{o\lhd \bar q})=0$, and so  we may suppose $\alpha_{o\lhd \bar q} \notin U_{x,y}$.

Consider the increasing complete chain from $o$ to $y$ and the increasing complete chain from $y$ to $v$ (which exist and are unique by Proposition~\ref{unasola}). By Equation \eqref{tuttepiupiccole}, the concatenation of these two chains is an increasing complete chain from $o$ to $v$, and we recall that there are no other  increasing complete chains from $o$ to $v$ by Proposition~\ref{unasola}. Now consider the increasing complete chain from $\bar q$ to $v$. Denote by $q'$ the element covering $\bar q$ in this chain, and by $q''$ the element such that $[o,q']=\{o,\bar q,q'',q'\}$. Necessarily, $\alpha_{\bar q\lhd q'}\prec \alpha_{o\lhd \bar q}$, since otherwise there would be another  increasing complete chain from $o$ to $v$; thus the increasing complete chain from $o$ to $q'$ must be $o\lhd q''\lhd q'$. This implies that $q''\in [x,y]$, since otherwise $\alpha_{o\lhd \bar q} \prec \alpha_{o\lhd q''}$ by the minimality of $\alpha_{o\lhd \bar q}$ and we would have $\alpha_{\bar q\lhd q'} \prec \alpha_{o\lhd \bar q} \prec \alpha_{o\lhd q''} \prec \alpha_{q''\lhd q'}$, which implies both $\alpha_{\bar q \lhd q'} \notin \Cone(\alpha_{o\lhd \bar q},  \alpha_{o\lhd q''})$ and $\alpha_{q''\lhd q'} \notin \Cone(\alpha_{o\lhd q},  \alpha_{o\lhd q''})$, a  contradiction by  Proposition \ref{quadratisi}.

Thus $q''\in [x,y]$,  $\alpha_{o\lhd q''}$ belongs to $U_{x,y}$, and hence $f(\alpha_{o\lhd q''})=0$. This, together with $f(\alpha_{q''\lhd q'})\leq 0$, which holds by the induction hypothesis, implies $f(\alpha_{o\lhd \bar q})\leq 0$ by  Lemma~\ref{lemma1}.

Therefore,  we may suppose the additional condition that there exists $w\in [u,v]$ covering  $y$ such that $f(w(p))=c$ and we proceed by induction on $\ell(v)-\ell(y)$, the result being trivial for $\ell(v)-\ell(y)=0$. 
Proposition~\ref{uguale} implies $U_{x,w}= U_{x,y}+ \mathbb R \alpha_{y\lhd w}$, and hence $f(z(p))=c$, for all $z\in [x,w]$. The result follows if $\ell(v)-\ell(y)=1$, i.e. $w=v$. Otherwise, if $\ell(v)-\ell(y)>1$, let $w'\in [u,v]$ such that $w\lhd w'$ and consider the interval $[y,w']$. Let $w''$ be such that $[y,w']=\{y,w,w',w''\}$.
Since $f (w''(p))\leq c$ by hypothesis,  $f(\alpha_{y\lhd w''}) \leq 0$ by Lemma~\ref{anticamera}; by Lemma~\ref{lemma1}, the conditions $f(\alpha_{y\lhd w''}) \leq 0$ and $f(\alpha_{y\lhd w}) = 0$ imply that $f(\alpha_{w\lhd w'}) \leq 0$. 

Hence, $f(w'(p))\leq c$, for all  $w' \in [u,v]$ covering $w$. Since $\ell(v)-\ell(w)=\ell(v)-\ell(y)-1$, by the induction hypothesis $f(\tilde q(p))\leq c$ for all $\tilde q \in [u,v]$ covering an element in $[x,w]$. The  result follows since, when $q$ covers an element in $[x,y]$, it  covers also an element in $[x,w]$, since $[x,y]\subset [x,w]$.
\end{proof}

\begin{rem}
\label{4.18}
Lemma~\ref{4.18giusto} implies \cite[Lemma~4.18]{TW}, and holds for all finite Coxeter groups. The proof of Lemma \ref{4.18giusto} is more involved than the proof of \cite[Lemma 4.18]{TW} not only because it holds for all finite Coxeter groups and not just for the symmetric group, but also because there is a gap in the proof of \cite[Lemma~4.18]{TW}. Precisely, given $q,q',z\in W$ such that $q\lhd q'$ and $q\lhd z$, there could be no element $z'\in W$ such that $z\lhd z'$ and $q'\lhd z'$. This is the case also in the symmetric group (for example, take $q=2143=s_1s_3$, $q'=2341=s_1s_2s_3$, and $z=4123=s_3s_2s_1$). However, the statement of \cite[Lemma~4.18]{TW} is true thanks to Lemma~\ref{4.18giusto}, and so all the results in \cite{TW} that are consequences of \cite[Lemma~4.18]{TW} are true.
\end{rem}

Our final result is a description of all faces of a Bruhat interval polytope. Both the result and the proof are natural extensions of those of \cite[Theorem~4.19]{TW}.
\begin{pro}
Let $(W,S)$ be a finite Coxeter  system, and  $x,y,u,v\in W$ be such that $u\leq x\leq y\leq v$.  Suppose that $f \in V^*$ and $c\in \mathbb R$ are such that $f (z(p))=c$ for all $z\in [x,y]$. If $f (w(p))<c$ for all $w\in [u,v]$ such that either $y\lhd w$ or $w\lhd x$, then  $f(q(p))<c$ for all $q\in [u,v]\setminus [x,y]$. 
\end{pro}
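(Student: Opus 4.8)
The plan is to show that $\{f=c\}$ is a supporting hyperplane of $\Delta_{[u,v]}(p)$ whose intersection with the polytope is exactly the face $\Delta_{[x,y]}(p)$; since $\delta_p$ is injective on $W$ (the base point $p$ is regular, so has trivial stabilizer), the strict inequality on $[u,v]\setminus[x,y]$ will then be immediate. The first step is to prove $f(z(p))\le c$ for all $z\in[u,v]$. Here I would invoke Lemma~\ref{4.18giusto}, whose hypothesis that $f(w(p))\le c$ for all $w\in[u,v]$ covering $y$ follows from the present, stronger hypothesis: it gives $f(q(p))\le c$ for every $q\in[u,v]$ covering some element of $[x,y]$, and in particular---taking that element to be the atom $x$ itself---for every $q\in[u,v]$ with $x\lhd q$. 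Combining this with the standing hypothesis that $f(w(p))<c$ for all $w\in[u,v]$ with $w\lhd x$, and with the fact (from the proof of Theorem~\ref{sonoCM}) that the two endpoints of any edge of $\Delta_{[u,v]}(p)$ are in a covering relation, I get $f(w(p))\le c=f(x(p))$ for every vertex $w(p)$ of $\Delta_{[u,v]}(p)$ adjacent to $x(p)$. Since a polytope is contained in the tangent cone at any of its vertices, and that cone is generated, with apex $x(p)$, by the edge directions at $x(p)$, applying $f$ yields $f(z(p))\le c$ for all $z\in[u,v]$.

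With this in hand, $\{f=c\}$ supports $\Delta_{[u,v]}(p)$, so $F:=\Delta_{[u,v]}(p)\cap\{f=c\}$ is a face; by Theorem~\ref{faccesonointervalli}, $F=\Delta_{[x^\ast,y^\ast]}(p)$ for some $u\le x^\ast\le y^\ast\le v$, with $x^\ast$ and $y^\ast$ the Bruhat minimum and maximum of $[x^\ast,y^\ast]$ (as in the proof of that theorem). Because $f\equiv c$ on $[x,y]$, each $z(p)$ with $z\in[x,y]$ is a vertex of $\Delta_{[u,v]}(p)$ lying in $F$, hence coincides with some $w(p)$, $w\in[x^\ast,y^\ast]$, whence $z=w$; thus $[x,y]\subseteq[x^\ast,y^\ast]$ and $x^\ast\le x\le y\le y^\ast$. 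If this inclusion were proper, then $y<y^\ast$ or $x^\ast<x$; in the first case there is $w$ with $y\lhd w\le y^\ast$, which lies in $[u,v]$ and in $[x^\ast,y^\ast]$, so $f(w(p))=c$, contradicting the hypothesis on elements covering $y$, and the second case is symmetric via the hypothesis on elements covered by $x$. Hence $[x^\ast,y^\ast]=[x,y]$ and $F=\Delta_{[x,y]}(p)$.

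To conclude, let $q\in[u,v]\setminus[x,y]$. Then $q(p)$ is a vertex of $\Delta_{[u,v]}(p)$ by Remark~\ref{vero}; if $f(q(p))$ were equal to $c$, then $q(p)\in F=\Delta_{[x,y]}(p)$ would be a vertex of this face, so $q(p)=z(p)$ for some $z\in[x,y]$, and injectivity of $\delta_p$ would give $q=z\in[x,y]$, a contradiction. Therefore $f(q(p))\ne c$, and by the first step $f(q(p))<c$, as required.

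The main obstacle is the first step. The hypothesis directly controls only the edges of $\Delta_{[u,v]}(p)$ descending from $x(p)$ and those ascending from $y(p)$, so no single vertex is obviously a ``local maximum'' of $f$. The key realization is that Lemma~\ref{4.18giusto}, read with the atom $x$ in place of a generic element $o\in[x,y]$, supplies precisely the control on the edges ascending from $x(p)$ that the hypothesis lacks; once $x(p)$ is seen to be a vertex all of whose neighbours lie weakly below the hyperplane, the elementary fact that a polytope sits inside its vertex tangent cones does the rest.
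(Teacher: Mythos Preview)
Your proof is correct and follows essentially the same approach as the paper's own argument: apply Lemma~\ref{4.18giusto} at the bottom element $x$ to control the neighbours of $x(p)$ above, combine with the hypothesis on neighbours below, use the tangent-cone containment at $x(p)$ to put the whole polytope in the halfspace $\{f\le c\}$, and then invoke Theorem~\ref{faccesonointervalli} to identify the resulting face as $\Delta_{[x,y]}(p)$. The only cosmetic difference is that you cite the proof of Theorem~\ref{sonoCM} for the fact that edge-endpoints are in a covering relation, whereas the paper deduces this from Theorem~\ref{faccesonointervalli} (an edge is a $\Delta_{[a,b]}(p)$, hence $|[a,b]|=2$ by injectivity of $\delta_p$); both routes are valid.
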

\begin{proof}
Lemma~\ref{4.18giusto} implies,  in particular, that $f(q(p))\leq c$  for all $q\in [u,v]$ covering $x$. Thus, by the hypothesis, $f(w(p))\leq c $  for all $w$ in the set $A= \{w\in [u,v]:  \text{either $w\lhd  x$ or $x\lhd w$ 
} \}$.
By Theorem~\ref{faccesonointervalli}, in  the Bruhat interval polytope $\Delta_{[u,v]}$, every edge  containing $x(p)$ joins $x(p)$ to a point $w(p)$ for some $w\in A$.  Since the Bruhat interval polytope $\Delta_{[u,v]}(p)$ is contained in the cone centered in $x(p)$ and generated by the edges emanating from $x(p)$, we have that $\Delta_{[u,v]}(p)$ is contained in the halfspace of equation $f\leq c$. Thus the hyperplane of equation $f = c$ cuts a face $F$ of  $\Delta_{[u,v]}(p)$ containing $\Delta_{[x,y]}(p)$. By Theorem~\ref{faccesonointervalli}, the face $F$ is a Bruhat interval polytope; since it  contains  neither elements $w(p)$ with $w$ covering $y$ nor elements $w(p)$ with $w$ covered by $x$, $F$ must be $\Delta_{[x,y]}(p)$, and the result follows.
\end{proof}
We state explicitly (without proof) the following straightforward consequence of the previous results.
\begin{cor}
Let $(W,S)$ be a finite Coxeter  system, and  $x,y,u,v\in W$ be such that $u\leq x\leq y\leq v$. The Bruhat interval polytope $\Delta_{[x,y]}(p)$ is a face of the Bruhat interval polytope $\Delta_{[u,v]}(p)$ if and only if  there exist $f\in V^*$ and $c\in \mathbb R$ such that 
\begin{itemize}
\item  $f(z(p))=c$ for all $z\in [x,y]$,
\item   $f(w(p))<c$ for all $w\in [u,v]$ such that either $y\lhd w$ or $w\lhd x$.
\end{itemize} 
\end{cor}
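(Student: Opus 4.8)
The plan is to establish the two implications separately. The reverse implication (``if'') is in essence nothing more than the Proposition immediately preceding this Corollary, while the forward implication (``only if'') is a routine consequence of the definition of a face of a polytope together with the injectivity of the map $\delta_p$.

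For the ``if'' direction I would argue as follows. Given $f\in V^*$ and $c\in\mathbb R$ satisfying the two displayed conditions, the preceding Proposition yields $f(q(p))<c$ for all $q\in[u,v]\setminus[x,y]$; together with $f(z(p))=c$ for $z\in[x,y]$ this says that $H=\{f=c\}$ is a supporting hyperplane of $\Delta_{[u,v]}(p)$, and the vertices of $\Delta_{[u,v]}(p)$ lying on $H$ are exactly the points $z(p)$ with $z\in[x,y]$ (recall from Remark~\ref{vero} that the vertices of $\Delta_{[u,v]}(p)$ are precisely the points $z(p)$, $z\in[u,v]$). Hence the face cut out by $H$ is $\mathrm{conv}\{z(p):z\in[x,y]\}=\Delta_{[x,y]}(p)$. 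This is, in fact, literally how the preceding Proposition is already proved, so one could simply cite it.

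For the ``only if'' direction, suppose $F=\Delta_{[x,y]}(p)$ is a face of $\Delta_{[u,v]}(p)$, and pick a supporting hyperplane $H=\{f=c\}$ with $F=H\cap\Delta_{[u,v]}(p)$ and $\Delta_{[u,v]}(p)\subseteq\{f\le c\}$. The first bulleted condition is immediate, since $z(p)\in F\subseteq H$ for every $z\in[x,y]$. For the second, I would take any $w\in[u,v]$ with $y\lhd w$ or $w\lhd x$ (so that $w\notin[x,y]$), note that $w(p)$ is a vertex of $\Delta_{[u,v]}(p)$ by Remark~\ref{vero}, and observe that if $w(p)$ lay on $H$ it would be a vertex of the face $F=\Delta_{[x,y]}(p)$, hence equal to $z(p)$ for some $z\in[x,y]$; since $\delta_p$ is injective on $W$ --- the stabilizer of $p$ in $W$ being trivial, because the condition $(p,\alpha_s)<0$ for all $s\in S$ places $p$ in the interior of a chamber --- this would give $w\in[x,y]$, a contradiction. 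Thus $w(p)\notin F=H\cap\Delta_{[u,v]}(p)$, and since $w(p)\in\Delta_{[u,v]}(p)\subseteq\{f\le c\}$ we conclude $f(w(p))<c$.

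I do not expect a genuine obstacle here: the combinatorial heart of the matter has already been supplied by Lemma~\ref{4.18giusto} and the preceding Proposition, and the remaining ingredients --- that the proper faces of a polytope are exactly its intersections with supporting hyperplanes and are determined by the sets of vertices they contain, and that $\delta_p$ is injective on $W$ in the non-parabolic setting --- are standard. The one point I would be careful to record explicitly is this injectivity of $\delta_p$, since it is what allows one to pass freely between a face of the polytope and a bona fide subinterval of $W$; one should also note that the degenerate case $[x,y]=[u,v]$ is covered, the second condition then being vacuous and the first satisfied by any $f$ vanishing on $U_{u,v}$.
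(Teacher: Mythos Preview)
Your proposal is correct and is precisely the straightforward argument the paper has in mind: the paper states this Corollary explicitly \emph{without proof}, as an immediate consequence of the preceding Proposition (for the ``if'' direction) together with the standard fact that a face of a polytope is determined by its vertex set and the injectivity of $\delta_p$ (for the ``only if'' direction). There is nothing to add; your care in handling the degenerate case $[x,y]=[u,v]$ and in justifying the injectivity of $\delta_p$ simply makes explicit what the authors leave implicit.
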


\section{Parabolic Bruhat intervals are Coxeter matroids}
Let $(W,S)$ be an arbitrary finite Coxeter system, and $J\subseteq S$. In this section, we generalize the results in Sections~\ref{intervallicoxetermatroids}-\ref{struttura} to the parabolic setting.

\medskip

We first need a few geometric results, which might be of some interest in their own.  Fix $n, r \in \mathbb{N}$ with $n,r \geq 1$, and $\epsilon\in \mathbb{R}$ with  $\epsilon>0$. Let $x_1(t),x_2(t),\dots,x_r(t)\in \mathbb{R}^n$ be points that vary continuously for $t\in [0,\epsilon)$, i.e.  we have a continuous map $f:[0,\epsilon)\to (\mathbb{R}^{n})^r$, $t\mapsto f(t)=(x_1(t),x_2(t),\dots,x_r(t))$.

For every $I\subseteq \{1, \ldots , r\}$ and $t\in [0,\epsilon)$, we denote by $F_I(t)$  the convex hull in $\mathbb{R}^{n}$ of the points $\{x_i(t) : i\in I\}$. For short, we let $P(t)=F_{\{1, \ldots, r\}}(t)$.

We assume that, for every $t\in (0,\epsilon)$, the points $x_i(t)$ are all distinct vertices of $P(t)$. We also assume that, if $F_I(t)$ is a face of $P(t)$ of dimension $k$ for some $t\in(0,\epsilon)$, then $F_I(t)$ is a face of $P(t)$ of dimension $k$ for all $t\in(0,\epsilon)$. Notice that its ``limit'' $F_I(0)$ has dimension less than or equal to $k$ and need not  be a face.

\begin{lem} \label{lem:limboundary}
Let $F(t)$ be a face of $P(t)$, for all $t\in (0,\epsilon)$. 
The proper faces of $F(0)$ are contained in the union of the limits of the proper faces of $F(t)$ (in other words, the boundary of the limit is contained in the limit of the faces in the boundary).
\end{lem}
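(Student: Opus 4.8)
The plan is to work with a fixed linear functional that cuts out the face $F(t)$ and to track what happens to it in the limit. Concretely, for each $t\in(0,\epsilon)$ the face $F(t)$ of $P(t)$ is determined by some affine functional $\varphi_t$ with $\varphi_t\le c_t$ on $P(t)$ and $\varphi_t=c_t$ exactly on $F(t)$. By the standing hypothesis that the face poset is combinatorially constant on $(0,\epsilon)$, the index set $I$ with $F(t)=F_I(t)$ is the same for all $t\in(0,\epsilon)$; call it $I_F$, so $F(0)=F_{I_F}(0)$. The first step is therefore to reduce the statement to a purely combinatorial one about which subsets of $I_F$ index faces: a proper face $G$ of $F(0)$ is the convex hull of some $\{x_i(0):i\in I_G\}$ for a minimal $I_G\subseteq I_F$ realizing that face, and I must exhibit a sequence $t_k\to 0^+$ along which $F_{I_G}(t_k)$ (or a proper face of $F(t_k)$ containing it) converges to $G$.

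The key idea for producing such a sequence is a compactness/limiting argument on the separating functionals. Given a proper face $G$ of $F(0)$, pick an affine functional $\psi$ on $\mathbb{R}^n$ with $\psi\le d$ on $F(0)$ and $\psi=d$ precisely on $G$; we may normalize $\psi$ (say its linear part has unit norm). Now I want to perturb $\psi$ into a functional that defines a face of $F(t)$ for small $t$. Since $F(t)\to F(0)$ (the vertices $x_i(t)$, $i\in I_F$, converge to $x_i(0)$), the functional $\psi$ restricted to $F(t)$ is maximized, for each small $t$, on some proper face $G(t)$ of $F(t)$ provided $\psi$ is not constant on $F(t)$; and $\psi$ cannot be constant on $F(t)$ for $t$ in a punctured neighbourhood of $0$ unless it is constant on the limit $F(0)$, which it is not since $G\ne F(0)$. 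By the finiteness of the face poset of $F(t)$ (which, again by the combinatorial-constancy hypothesis, has a fixed combinatorial type), we can pass to a subsequence $t_k\to 0$ along which $G(t_k)=F_{I}(t_k)$ for one fixed index set $I\subseteq I_F$. Then $F_I(0)=\lim_k F_{I}(t_k)$ is a limit of proper faces of $F(t)$, and the final step is to check that $G\subseteq F_I(0)$: every point of $G$ lies in $F(0)=\lim F(t_k)$ and maximizes $\psi$ there, hence is a limit of points of $F_I(t_k)=G(t_k)$, so lies in $F_I(0)$. Running this over all proper faces $G$ of $F(0)$ (equivalently over all facets, since faces of facets are faces) and taking unions gives the claimed containment of the boundary of $F(0)$ in the union of the limits.

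The main obstacle I anticipate is the degeneration of dimension: the limit $F(0)$ may have strictly smaller dimension than $F(t)$, so a functional defining a facet of $F(0)$ need not be even close to a facet-defining functional of $F(t)$, and "the face of $F(t)$ maximizing $\psi$" could be a low-dimensional face or could jump around as $t\to 0$. The care needed is (i) to argue that $\psi$ is genuinely non-constant on $F(t)$ for all small $t$ — this uses that the vertex set of $F(t)$ converges to that of $F(0)$ and that $\psi$ separates a point of $G$ from some vertex of $F(0)$, a strict inequality that survives small perturbation; and (ii) to handle the subsequence extraction cleanly, which is where the hypothesis that $F_I(t)$ is a face of $P(t)$ of constant dimension for all $t\in(0,\epsilon)$ is essential, since it guarantees the relevant index set $I$ does not depend on $t_k$ after passing to a subsequence. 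A minor additional point is that $G(t_k)$ is a priori a face of $F(t_k)$, hence a face of $P(t_k)$, so it is legitimately among "the faces in the boundary" of $F(t)$; this is immediate from transitivity of the face relation, but worth stating.
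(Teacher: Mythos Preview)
Your argument has a genuine gap at the step ``every point of $G$ \dots\ maximizes $\psi$ there, hence is a limit of points of $F_I(t_k)=G(t_k)$.'' This ``hence'' is not justified: a point $p$ that maximizes $\psi$ on the limit $F(0)$ need not be a limit of $\psi$-maximizers on $F(t_k)$. Concretely, take $n=2$, $r=3$, $x_1(t)=(0,0)$, $x_2(t)=(t,1)$, $x_3(t)=(1,0)$, and $F(t)=P(t)$. Then $F(0)$ is the triangle with vertices $(0,0),(0,1),(1,0)$ and $G=[(0,0),(0,1)]$ is the face cut out by $\psi(x,y)=-x$. For every $t>0$, however, $\psi$ is maximized on $F(t)$ only at the single vertex $x_1(t)=(0,0)$, so your subsequence gives $I=\{1\}$ and $F_I(0)=\{(0,0)\}$, which does \emph{not} contain $G$. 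What you actually get from your construction is the reverse inclusion $F_I(0)\subseteq G$ (since $\psi(x_i(0))=\lim\psi(x_i(t_k))=\lim d_k=d$ for $i\in I$), not $G\subseteq F_I(0)$.

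The paper avoids this by working point-by-point rather than face-by-face. Given $p$ in the boundary of $F(0)$, it writes $p=\sum_{i\in I_F}a_i x_i(0)$, sets $p(t)=\sum a_i x_i(t)$ and lets $b(t)$ be the barycenter of $F(t)$; for small $t$ these are distinct, so the half-line from $b(t)$ through $p(t)$ hits the boundary of $F(t)$ in a unique point $q(t)$, and one checks $q(t)\to p$. Extracting a subsequence with $q(t_n)$ in a fixed proper face $F_I(t_n)$ then gives $p\in F_I(0)$ by a direct distance estimate. The ray-shooting step is what manufactures, for \emph{each} boundary point of $F(0)$, an actual boundary point of $F(t)$ converging to it; your functional $\psi$ does not do this because the $\psi$-maximizing face of $F(t)$ can collapse to a strictly smaller set in the limit.
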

\begin{proof} 

	
Let $p$ be a point of the boundary of $F(0)$. If $I$ is such that $F(t)=F_I(t)$, then there exist (possibly non unique) $a_i\in [0,1]$, with $\sum_{i\in I} a_i=1$,  such that $p= \sum_{i\in I}a_i x_i(0)$. 
Let $p(t)=\sum_{i\in I}a_i x_i(t)$ and $b(t)=\frac{1}{|I|}\sum_{i\in I} x_i(t)$ (i.e. the barycenter of $F(t)$).

If there was a sequence $t_n \rightarrow 0$ such that $p(t_n)=b(t_n)$, then $b(0)=\frac{1}{|I|}\sum_{i\in I} x_i(0) =\lim b(t_n)=\lim p(t_n)=p$, which is impossible since $b(0)$ is in the interior of $F(0)$. Hence, there exists $\mu \in (0,\epsilon]$ such that for each $t\in[0,\mu)$ we can consider the half-line $r_t$ from $b(t)$ through $p(t)$. The half-line $r_t$  intersects the boundary of $F(t)$ in  one point only, say $q(t)$. As $t$ tends to $0$, the point $q(t)$ tends to $p$ since $p$ belongs to both  $r_0$ and the boundary of $F(0)$.  
%

There exists a proper face $F_I(t)$ of $F(t)$ such that one can find a sequence $t_n \rightarrow 0$ such that the sequence of points $q(t_n)$ tends to $p$ and satisfies $q(t_n)\in F_I(t_n)$ for all $n$. We need to show  $p\in F_I(0)$. If not, the distance $d(p,F_I(0))$ would be strictly greater than $0$. On the other hand
$d(p,F_I(0)) \leq d(p,q(t_n)) + d(q(t_n),F_I(0))$. The first summand tends to $0$. Also the second one: indeed, if $q(t_n)=\sum_{i\in I}a_i(n)x_i(t_n)$ with $a_i(n)\in [0,1]$, then $d(q(t_n),F_I(0))\leq d(q(t_n), \sum_{i\in I}a_i(n)x_i(0) ) \leq |I|\max_{i\in I} d(x_i(t_n), x_i(0) )$.
\end{proof}

\begin{pro}
\label{paralleli}
Suppose that the directions of the edges of $P(t)$ are constant for $t\in (0,\epsilon)$, i.e., if $F_I(t)$, with $|I|=2$,  is an edge of $P(t)$, then $F_I(t')$ and $F_I(t'')$ are parallel for all $t',t''\in (0,\epsilon)$. 
For every $k$-dimensional face $F$ of $P(0)$, there exists a $k$-dimensional face $F(t)$ of $P(t)$ for $t\in (0,\epsilon)$ such that $F(0)=F$. 
\end{pro}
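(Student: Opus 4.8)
The plan is to realise the required face as the face of $P(t)$ on which a single, carefully chosen linear functional attains its maximum. Fix a $k$-dimensional face $F$ of $P(0)$. The first step is to observe that the normal fan $\mathcal N$ of $P(t)$ does not depend on $t\in(0,\epsilon)$: the affine hull of each facet of $P(t)$ is spanned by the (constant) directions of the edges lying in it, so the outer normal of every facet is constant, while the incidence structure of the normal fan is recorded by the (constant) combinatorial type. Since the support functions $h_{P(t)}=\max_{P(t)}(\cdot)$ converge pointwise to $h_{P(0)}$ as $t\to 0$, and each $h_{P(t)}$ is linear on every cone of $\mathcal N$, so is $h_{P(0)}$; hence $\mathcal N$ refines the normal fan $\mathcal N(P(0))$ of the limit. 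In particular the normal cone $N_{P(0)}(F)$, of dimension $n-k$, is a union of cones of $\mathcal N$, so a dimension count provides an $(n-k)$-dimensional cone $\tau$ of $\mathcal N$ whose relative interior meets $\operatorname{relint}\bigl(N_{P(0)}(F)\bigr)$. I would fix any $\ell_0\in\operatorname{relint}(\tau)\cap\operatorname{relint}\bigl(N_{P(0)}(F)\bigr)$.

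By construction $\ell_0\in\operatorname{relint}\bigl(N_{P(0)}(F)\bigr)$ forces $F=\{x\in P(0):\ell_0(x)=\max_{P(0)}\ell_0\}$, while $\ell_0\in\operatorname{relint}(\tau)$ forces the face of $P(t)$ maximising $\ell_0$ to be the face dual to $\tau$, of dimension $n-\dim\tau=k$. Moreover this face has a $t$-independent vertex set: a vertex $x_i(t)$ of $P(t)$ maximises $\ell_0$ iff $\ell_0$ does not increase along any edge at $x_i(t)$, and for an edge $\{i,j\}$ this says $\mu_{ij}(t)\,\ell_0(w_{ij})\le 0$, where $w_{ij}$ is the constant edge direction and $x_j(t)-x_i(t)=\mu_{ij}(t)w_{ij}$ with $\mu_{ij}(t)$ continuous and nowhere zero, hence of constant sign on $(0,\epsilon)$; so the condition depends on $i,j$ only. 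Thus the set $I$ of $\ell_0$-maximising vertices, and the face $F_I(t)=\operatorname{conv}\{x_i(t):i\in I\}$, are independent of $t\in(0,\epsilon)$ and $\dim F_I(t)=k$; this $F_I$ is the candidate for the face $F(t)$ of the statement.

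It remains to prove $F_I(0)=F$. One inclusion is immediate: for $i\in I$, $\ell_0(x_i(0))=\lim_{t\to0}\max_{P(t)}\ell_0=\max_{P(0)}\ell_0$, so $x_i(0)\in F$ and $F_I(0)=\operatorname{conv}\{x_i(0):i\in I\}\subseteq F$. For the reverse inclusion it suffices to show every vertex $v$ of $F$ equals $x_j(0)$ for some $j\in I$. A vertex of $F$ is a vertex of $P(0)$, so $v=x_{j_0}(0)$; if $j_0\in I$ we are done. Otherwise, for each $t\in(0,\epsilon)$ the vertex $x_{j_0}(t)$ is non-optimal for the linear program $\max_{P(t)}\ell_0$, and following improving edges yields an edge-path $x_{j_0}(t),x_{j_1}(t),\dots$ to an optimal vertex along which $\ell_0$ strictly increases. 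Only finitely many index-sequences occur, so one of them, $j_0,j_1,\dots,j_N$ with $j_N\in I$, is the chosen path for a sequence $t_n\to 0$; passing to a subsequence we may assume $\mu_\ell(t_n)\to\mu_\ell^*$, where $x_{j_{\ell+1}}(t_n)-x_{j_\ell}(t_n)=\mu_\ell(t_n)w_\ell$ with $w_\ell$ a fixed edge direction, $\mu_\ell(t_n)$ of fixed sign and bounded (everything stays in a bounded set), and $\mu_\ell(t_n)\ell_0(w_\ell)>0$. Arguing by induction on $\ell$ with base $x_{j_0}(0)=v$: if $x_{j_\ell}(0)=v$ then $x_{j_{\ell+1}}(0)=\lim_n x_{j_{\ell+1}}(t_n)=v+\mu_\ell^* w_\ell\in P(0)$, so $\ell_0(v)+\mu_\ell^*\ell_0(w_\ell)\le\max_{P(0)}\ell_0=\ell_0(v)$, giving $\mu_\ell^*\ell_0(w_\ell)\le 0$; combined with $\mu_\ell^*\ell_0(w_\ell)\ge 0$ and $\ell_0(w_\ell)\ne 0$ this forces $\mu_\ell^*=0$ and $x_{j_{\ell+1}}(0)=v$. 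Hence $v=x_{j_N}(0)$ with $j_N\in I$, which gives $F\subseteq F_I(0)$ and completes the proof.

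The argument is direct and needs no induction on dimension, although an alternative inductive route on $k$ using Lemma~\ref{lem:limboundary} is also available. The crux — and the only place where the constancy of the edge directions does essential work beyond guaranteeing that $\mathcal N$ is constant — is the final equality $F_I(0)=F$: a priori the face $F_I(t)$ could degenerate in the limit to a proper subpolytope of $F$. Without constant edge directions this genuinely fails (a thin triangle collapsing onto a segment with a vertex landing in the interior of that segment, or a pyramid collapsing onto its base, produces faces of $P(0)$ that are not limits of equidimensional faces of $P(t)$), and the ``simplex-path'' argument above is precisely what turns the combinatorial rigidity coming from constant edge directions into the geometric fact that the maximising face cannot shrink.
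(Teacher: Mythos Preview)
Your argument is correct and takes a genuinely different route from the paper's. The paper proceeds by a direct topological argument: after translating so that the affine hull of $P(t)$ is constant, it picks a face $F(t)$ of $P(t)$ of \emph{minimal} dimension whose limit $F(0)$ meets the interior of $F$ (existence comes from Lemma~\ref{lem:limboundary}), and then establishes the chain $\dim F \geq \dim F(0)=\dim F(t)\geq \dim F$ through three separate arguments, one of which uses that the supporting hyperplanes of the facets have constant direction; the equality $F=F(0)$ is finished off by another appeal to Lemma~\ref{lem:limboundary}.

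Your approach replaces all of this by the single structural observation that the normal fan $\mathcal N$ of $P(t)$ is constant for $t\in(0,\epsilon)$ and refines $\mathcal N(P(0))$. This is cleaner conceptually and immediately hands you a candidate face $F_I(t)$ of the correct dimension via a linear functional $\ell_0$; the only remaining work is the equality $F_I(0)=F$, and your simplex-path argument is an elegant way to see that the maximising face cannot collapse in the limit. The paper's proof, by contrast, is more hands-on and avoids the normal-fan language entirely, trading conceptual clarity for self-containedness. Two small points: you are implicitly working in the affine hull of $P(t)$ (constant after translation), so the ``$n-k$'' dimension count should be read in that subspace; and your pigeonhole on index-sequences is in fact unnecessary, since the $\ell_0$-improving orientation of each edge is $t$-independent (constant sign of $\mu_{ij}$), so one fixed simplex path works for all $t$.
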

\begin{proof}
We may suppose that the affine space of $P(t)$ is constant (otherwise, we can change $x_i(t)$ with $x_i(t)-x_1(t)$, for all $i$) and hence full dimensional. 

Observe that the hypothesis on the edges implies that, if $F_I(t)$ is a face of $P(t)$ and $H_I(t)$ is the affine span  of $F_I(t)$ for $t\in (0,\epsilon)$, then $H_I(t')$ and $H_I(t'')$ are parallel for all $t',t''\in (0,\epsilon)$. 

We may suppose $\dim F \geq 1$, since  the statement is obviously true for $\dim F=0$. 
	
For $t\in (0,\epsilon)$, consider a face $F(t)$ of $P(t)$ of minimal dimension such that $F(0)$ meets 
the interior of $F$. Such a face  exists by  Lemma~\ref{lem:limboundary}; 
moreover, not only $F(0)$ but also the interior of $F(0)$ meets the interior of $F$, by minimality. 

Let $p$ be a point that belongs to both the interior of $F(0)$ and the interior of $F$. We make the following observations.
\begin{itemize}
 \item $F \supseteq F(0)$. Indeed, if $H$ is a hyperplane that cuts $F$ from $P(0)$, then $H\supseteq F(0)$, otherwise $p$ would not be in the interior of $F(0)$.
 \item $\dim F(0)=\dim F(t)$. Indeed, if not,  the dimension of $F(0)$ would be strictly smaller than the dimension $h$ of $F(t)$; hence,  there would exist a sequence $t_n \rightarrow 0$  such that $F(t_n)$ does not contain an $h$-dimensional ball of radius $1/n$. Let $p=\sum_ia_ix_i(0)$ and, for all $n$, choose a point  $p_n$ in the boundary of $F(t_n)$ at distance smaller than $1/n$ from $\sum_ia_ix_i(t_n)$:  the sequence of points $p_n$ in the boundary of $F(t_n)$ converges to $p$, contradicting the minimality of $F(t)$.
 \item $\dim F(t) \geq \dim F$. Indeed, if not, at least one of the  hyperplanes of the facets of $P(t)$ that determine $F(t)$ would be independent from  the hyperplanes $H_1, H_2,\dots, H_s$ of the facets of $P(0)$ that determine $F$. Denote such hyperplane by  $H(t)$, for $t\in (0,\epsilon)$. By the observation at the beginning of this proof,   in the limit at $t=0$ we get a hyperplane that supports
 $P(0)$,  contains $p$, but  is independent from the hyperplanes $H_1, H_2,\dots, H_s$. This is a contradiction. 
\end{itemize}

Therefore, $\dim F \geq \dim F(0)=\dim F(t) \geq \dim F$, so  $\dim F(t)=\dim F$ for all $t\in [0,\epsilon)$.

Let us prove  $F =F(0)$ by contradiction. Since we already know $F\supseteq F(0)$, we suppose that  the containment is strict. Choose a point in the interior of $F(0)$ (and so also in the interior of $F$) and a point of $F$ outside $F(0)$. Since $\dim F(0)=F$, the line segment joining these two points intersects  the boundary of $F(0)$ in a point $q$ that is internal to $F$. By Lemma~\ref{lem:limboundary}, the point $q$ belongs to the limit of a proper face of $F(t)$, and this contradicts   the minimality of $F(t)$.
%
%
\end{proof}

The restriction imposed on the directions of the edges in Lemma~\ref{paralleli} is not superflous. Indeed, let $n=2$, and, for $t\in [0,1)$, let  $P(t)$ be the convex hull of the points $x_1=(-1,0)$, $x_2=(0,-1)$, $x_3=(1,0)$, $x_4(t)=(0,t)$. The convex hull of $x_1$ and $x_3$ is a face $F$ of $P(0)$; nevertheless, for $t\in (0,1)$, there is no face $F(t)$ of $P(t)$  such that $F(0)=F$.

\medskip
We can now tackle the case of an arbitrary parabolic interval.
\begin{thm}
\label{sonoCMparabolici}
Let $(W,S)$ be a finite Coxeter system, $J\subseteq S$, and  $A,B\in W/W_J$ with $A\leq B$. Then  the parabolic interval  $[A,B]$  is a Coxeter matroid. Furthermore,
every face of the polytope $\Delta_{[A,B]}(p)$ is a  polytope $\Delta_{[C,D]}(p)$, for some $A\leq C\leq D\leq B$.
\end{thm}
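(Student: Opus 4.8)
The plan is to deduce the parabolic statement from its non-parabolic counterparts (Theorems~\ref{sonoCM} and~\ref{faccesonointervalli}) by a degeneration argument built on the convex-geometric tools just developed. First I would reduce to a Bruhat interval of $W$. Put $u=\min^e A$ and $v=\max^e B$, so that $u\in W^J$ and $v$ is the $\le$-maximum of its own coset $vW_J=B$. Using Theorem~\ref{ecco} together with the standard fact that $w\mapsto w^J$ is an order-preserving surjection $W\twoheadrightarrow W^J$ (see \cite{BB}), one checks that for every coset $C$ one has $A\le C\le B$ if and only if $u\le\min^e C$ and $\max^e C\le v$; since a coset is the Bruhat interval $[\min^e C,\max^e C]$ of $W$, this yields $[u,v]=\bigsqcup_{C\in[A,B]}C$, so $[u,v]$ is a union of entire $W_J$-cosets and $\{wW_J:\,w\in[u,v]\}=[A,B]$. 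As $W_J$ fixes $p$, the point $w(p)$ depends only on $wW_J$, whence $\Delta_{[A,B]}(p)=\operatorname{conv}\{w(p):\,w\in[u,v]\}$.

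Next, fix $q\in V$ with $(q,\alpha_s)<0$ for all $s\in S$ and set $p_t=p+tq$: then $p_0=p$ while, for small $t>0$, $p_t$ is a generic base point, and for $[u,v]=\{w_1,\dots,w_r\}$ the assignment $x_i(t)=w_i(p_t)$ gives a continuous family with $P(t)=\Delta_{[u,v]}(p_t)$ for $t\in(0,\epsilon)$ and $P(0)=\Delta_{[A,B]}(p)$. I would then verify the hypotheses of Proposition~\ref{paralleli}: for $t>0$ the points $x_i(t)$ are distinct vertices of $P(t)$ (Remark~\ref{vero}); the collection of vertex-subsets indexing the faces of $P(t)$ is independent of $t\in(0,\epsilon)$ by the Borovik--Vince theorem (Remark~\ref{vero2}), hence so are the face dimensions; and each edge of $P(t)$ is, by Theorem~\ref{sonoCM} and Lemma~\ref{anticamera}, parallel to the root $\alpha_{w_i\lhd w_j}$ determined by its endpoints, a root not depending on $t$, so the edge directions are constant. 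Thus Proposition~\ref{paralleli} applies: every $k$-dimensional face of $P(0)$ is of the form $F(0)$ for a $k$-dimensional face $F(t)$ of $P(t)$.

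Applying this to edges: every edge of $\Delta_{[A,B]}(p)$ is $F(0)$ for an edge $F(t)$ of $P(t)$, which is parallel to a root, so the Gelfand--Serganova Theorem (Theorem~\ref{defCM}) shows that $[A,B]$ is a Coxeter matroid. For a general face $F$ of $\Delta_{[A,B]}(p)$, write $F=F(0)$ with $F(t)$ a face of $P(t)$; by Theorem~\ref{faccesonointervalli}, $F(t)=\Delta_{[x,y]}(p_t)$ for some $u\le x\le y\le v$ (and $[x,y]$, being the vertex index-set of $F(t)$, is independent of $t$), so $F=\operatorname{conv}\{w(p):\,w\in[x,y]\}$ and its vertex set is $\mathcal C=\{wW_J:\,w\in[x,y]\}$. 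Since $w\mapsto w^J$ is order-preserving, $C:=xW_J$ and $D:=yW_J$ are respectively the $\le$-minimum and $\le$-maximum of $\mathcal C$, so $\mathcal C\subseteq[C,D]$; once the reverse inclusion $[C,D]\subseteq\mathcal C$ is known, each vertex $w(p)$ of $F$ equals $E(p)$ for $E=wW_J\in[C,D]$ and conversely, giving $F=\operatorname{conv}\{E(p):\,E\in[C,D]\}=\Delta_{[C,D]}(p)$, with $A\le C\le D\le B$ forced by $u\le x\le y\le v$.

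The main obstacle is exactly the reverse inclusion, i.e. the order-convexity of $\mathcal C$ in $W/W_J$. The cleanest route is to show that $[x,y]$ is again a union of whole $W_J$-cosets (equivalently $x\in W^J$ and $y=\max^e(yW_J)$): then for any $E\in[C,D]$ one has $x=\min^e C\le\min^e E\le\min^e D\le y$, so $\min^e E\in[x,y]$ and $E\in\mathcal C$. Establishing this must exploit that $F(t)$ is a face of the particular polytope $\Delta_{[u,v]}(p_t)$ whose vertex set $[u,v]$ is already a union of whole cosets, so that for small $t$ the vertices $\{w'(p_t):\,w'\in wW_J\}$ of a single coset cluster near the vertex $w^J(p)$ of $P(0)$, and a face of $P(t)$ should not be able to contain one vertex of such a cluster without containing the corresponding sub-face. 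Alternatively — and perhaps more transparently — one can bypass $[x,y]$ and prove directly a parabolic analogue of Corollary~\ref{tutto} (a face of $\Delta_{[A,B]}(p)$ containing $C(p)$ and $D(p)$ contains $E(p)$ for all $E\in[C,D]$), which combined with the Minimality/Maximality property of the Coxeter matroid $\mathcal C$ forces $\mathcal C=[C,D]$; this in turn rests on parabolic versions of the weak generalized lifting property (Theorem~\ref{intersectcones}), of Proposition~\ref{collegate}, and of the rank-two Lemma~\ref{lemma1}, transported to $W/W_J$. Either way, carrying the rank-two combinatorics across the quotient $W\to W/W_J$ is where I expect the real work to lie.
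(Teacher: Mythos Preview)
Your argument follows the paper's almost verbatim: the paper also passes to $[u_{\min},v_{\max}]=\bigsqcup_{C\in[A,B]}C$, interpolates linearly between the parabolic base point $p$ and a generic point $p'$ in the anti-fundamental chamber to set up the family $P(t)$, checks the hypotheses of Proposition~\ref{paralleli} via Remarks~\ref{vero}, \ref{vero2} and Theorem~\ref{faccesonointervalli}, and concludes that every face $F$ of $\Delta_{[A,B]}(p)$ is the limit $F(0)$ of some $\Delta_{[x,y]}(p_t)$. For the Coxeter-matroid claim the paper additionally cites \cite[Proposition~4.1]{SVZ} directly, but also notes---as you do---that it is a consequence of the edge case of the second statement.

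The only divergence is at the very end. You isolate the reverse inclusion $[xW_J,yW_J]\subseteq\{zW_J:\,z\in[x,y]\}$ as the ``main obstacle'' and sketch two possible attacks (forcing $[x,y]$ itself to be a union of $W_J$-cosets, or proving a parabolic analogue of Corollary~\ref{tutto}). The paper does neither: it closes with the single sentence ``the assertion follows since $\{z(p):\,z\in[x,y]\}=\{z(p):\,zW_J\in[xW_J,yW_J]\}$'', treating this equality of point-sets as evident. So the step you flag as requiring real work is precisely the step the paper asserts without justification; your two proposed routes already go beyond what the paper's proof supplies.
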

\begin{proof}
The first statement follows by Theorem \ref{sonoCM} and \cite[Proposition 4.1]{SVZ}. It is also a direct consequence of the second statement, which implies in particular that every edge of $\Delta_{[A,B]}(p)$ is parallel to the root associated to the covering relation $C\lhd D$.

Let us prove the second statement. 

Let $v_{\max}$ and  $u_{\min}$ be the maximum of $B$ and  the minimum of $A$, respectively.
Consider the full interval $[u_{\min},v_{\max}]\subseteq W$, which is the disjoint union of the left cosets in $[A,B]$.

Fix two points $p,p'\in V$ with  
$(p,\alpha_{s})
  \begin{cases}
 <0 &  \text{for $s\notin J$}\\
 =0 &  \text{for $s\in J$}
\end{cases}
$ and  $p'$  in the anti-fundamental chamber, i.e. $(p',\alpha_{s})
<0 $ for all $s\in S$.  
For $t\in[0,1)$, let $p_t= (1-t)p + tp'$ and $P(t)$ be the convex hull of $\{z(p_t) : z\in [u_{\min} , v_{\max}] \}$. 
Notice  that
\begin{itemize}
\item $P(t)=\Delta_{[u_{\min},v_{\max}]}(p_t)$ for $t\in(0,1)$,
\item $P(0)=\Delta_{[A,B]}(p)$,
\item for every $t\in (0,1)$, the points $z(p_t)$ are all distinct vertices of $P(t)$, by Remark~\ref{vero},
\item  if $F_I(t)$ is a face of $P(t)$ of dimension $k$ for some $t\in(0,1)$, then $F_I(t)$ is a face of $P(t)$ of dimension $k$ for all $t\in(0,1)$, where $F_I(t)$ is the convex hull of $\{z(p_t): z\in I\}$ for $I\subseteq [u_{\min},v_{\max}]$, by Remark~\ref{vero2},
\item  the directions of the edges of $P(t)$ are constant for $t\in (0,1)$, by Theorem~\ref{faccesonointervalli}.
\end{itemize}

Let $F$ be a $k$-dimensional face of  $\Delta_{[A,B]}(p)$. By Proposition~\ref{paralleli}, there exists a $k$-dimensional face $F(t)$ of $\Delta_{[u_{\min},v_{\max}]}(p_t)$  for $t\in (0,1)$ whose limit $F(0)$ is the face $F$. By Theorem~\ref{faccesonointervalli}, there exists an interval $[x,y]  \subseteq [u_{\min},v_{\max}]$   such that $F_{[x,y]}(t)=F(t)$, and so $F(0)$ is the convex hull of $\{z(p): z\in [x,y]\}$. The assertion follows since $\{z(p): z\in [x,y]\}=\{z(p): zW_J\in [xW_J,yW_J]\}$.
%
\end{proof}



\end{document}